\theoremstyle{plain}
\newtheorem{theorem}{Theorem}[section]
\newtheorem{proposition}[theorem]{Proposition}
\newtheorem{corollary}[theorem]{Corollary}
\newtheorem{lemma}[theorem]{Lemma}
\theoremstyle{definition}
\newtheorem{remarks}[theorem]{Remarks}
\newtheorem{example}[theorem]{Example}
 \DeclareMathOperator{\Id}{\mathrm{Id}}
 \newcommand{\N}{\mathbb{N}}
 \renewcommand{\geq}{\geqslant}
 \renewcommand{\leq}{\leqslant}
\begin{document}
\title[Geometry of von Neumann algebra preduals]{On the Geometry of von Neumann algebra preduals}

\author[M.~Mart\'{\i}n]{Miguel Mart\'{\i}n$^1\,$}
\address[Mart\'{\i}n]{Departamento de An\'{a}lisis Matem\'{a}tico,
Facultad de Ciencias,
Universidad de Granada,
E-18071 Granada, Spain}
\email{\texttt{mmartins@ugr.es}}
\thanks{$^1\,$Supported by by Spanish MICINN and FEDER project no.\ MTM2009-07498, Junta de Andaluc\'{\i}a and FEDER grants FQM-185 and P09-FQM-4911, and by ``Programa Nacional de Movilidad de Recursos Humanos del Plan Nacional de I+D+i 2008--2011'' of the Spanish MECD}

\author[Y.~Ueda]{Yoshimichi Ueda$^2\,$}
\address[Ueda]{Graduate School of Mathematics,
Kyushu University,
Fukuoka, 819-0395, Japan}
\email{\texttt{ueda@math.kyushu-u.ac.jp}}
\thanks{$^2\,$Supported by Grant-in-Aid for Scientific Research (C) 20540214.}

\subjclass[2000]{Primary 46L10; Secondary 46B04, 46B20, 46L30}
\keywords{von Neumann algebra, predual, factor, diffuseness, Daugavet property, extreme point, girth curve, flat space, ultraproduct, ultrapower}

\begin{abstract}
Let $M$ be a von Neumann algebra and let $M_\star$ be its (unique) predual. We study when for every $\varphi\in M_\star$ there exists $\psi\in M_\star$ solving the equation $\|\varphi \pm \psi\|=\|\varphi\|=\|\psi\|$. This is the case when $M$ does not contain type I nor type III$_1$ factors as direct summands and it is false at least for the unique hyperfinite type III$_1$ factor. We also characterize this property in terms of the existence of centrally symmetric curves in the unit sphere of $M_\star$ of length $4$. An approximate result valid for all diffuse von Neumann algebras allows to show that the equation has solution for every element in the ultraproduct of preduals of diffuse von Neumann algebras and, in particular, the dual von Neumann algebra of such ultraproduct is diffuse. This shows that the Daugavet property and the uniform Daugavet property are equivalent for preduals of von Neumann algebras.
\end{abstract}

\date{October 6th, 2013}

\maketitle
\allowdisplaybreaks{

\section{Introduction}

Let $M$ be an arbitrary von Neumann algebra and denote its unique predual by $M_\star$, and they are known to be non-commutative counterparts to usual $L_\infty(\mu)$ and $L_1(\mu)$, respectively, with measure $\mu$. We will investigate the geometry of $M_\star$, and hence our investigation in the present paper should be regarded as analysis on non-commutative $L_1$-spaces. What we actually want to investigate is whether or not for every $\varphi \in M_\star$ there exists $\psi \in M_\star$ so that
\begin{equation}\label{eq:main}\tag{$\diamondsuit$}
\Vert \varphi \pm \psi\Vert = \Vert\varphi\Vert = \Vert\psi\Vert.
\end{equation}
It is easy to show that this is the case when $M_\star$ is $L_1[0,1]$ (see \cite[Example 5.12]{GuerreroMartin:JFA05} for instance), and moreover that if equation \eqref{eq:main} is solvable, then $M$ must be diffuse. Here the diffuseness is equivalent to that $\mu$ is atomless when $M = L_\infty(\mu)$ or $M_\star = L_1(\mu)$. The necessary definitions and background on von Neumann algebras are included in section~\ref{sec:VNA}.

For an element $x$ in an arbitrary Banach space $X$, the existence of $y\in X$ solving the equation $\|x \pm y\|=\|x\|=\|y\|$ is equivalent to the fact that $x$ is the center of a segment in the closed ball of radius $\|x\|$ of maximal length (i.e., $2\|x\|$). Indeed, if $\|x \pm y\|=\|x\|=\|y\|$, then $x=\tfrac12 (x+y) + \tfrac12 (x-y)$ with $\|(x+y)-(x-y)\|=2\|x\|$; conversely, if $x=\tfrac12 x_1 + \tfrac12 x_2$ with $\|x\|=\|x_1\|=\|x_2\|=\tfrac12 \|x_1-x_2\|$, then $y=\tfrac12 (x_1-x_2)$ satisfies $\|x \pm y\|=\|x\|=\|y\|$. Therefore, what we are investigating is when every point in a  closed ball of the predual of a von Neumann algebra fails to be an extreme point of the ball in the strongest possible way. Here we recall that the fact that the balls in the predual of a diffuse von Neumann algebra lack to have extreme points is well-known. This discussion together with the fact that the result is immediate for $L_1[0,1]$, and the investigation on the (uniform) Daugavet property in \cite[\S5]{GuerreroMartin:JFA05} motivate us to study equation \eqref{eq:main}.

In the present paper we will see that if neither type I nor type III$_1$ factor appears as a direct summand of a von Neumann algebra $M$, then for every element $\varphi\in M_\star$ equation \eqref{eq:main} has a solution $\psi\in M_\star$. Moreover, we will clarify which $\varphi$ has such $\psi$ in general situation, showing that $\psi$ is $\varphi e_+ - \varphi e_-$ for some pair $e_\pm$ of orthogonal projections in the centralizer of $|\varphi|$ (see the comment just before Lemma~\ref{L3} for the definition of centralizers). As a consequence of the latter, the answer to the question is negative at least for the unique hyperfinite type III$_1$ factor, though it is diffuse. This is an exactly non-commutative phenomenon that does never appears in the commutative setting. These results can be interpreted as follows: if a diffuse von Neumann algebra $M$ does not contain any factor of type III$_1$ as a direct summand, then every element $\varphi$ in the unit sphere of $M_\star$ can be connected to $-\varphi$ through a curve lying on the unit sphere with length $2$; this does not happen for the unique hyperfinite type III$_1$ factor.
On the other hand, for every diffuse $M$ we will also provide the following approximate result: for any given $\varphi \in M_\star$ and any given $\varepsilon>0$, there exists $\psi_\varepsilon \in M_\star$ such that $\Vert\varphi\Vert-\varepsilon < \Vert\varphi\pm\psi_\varepsilon\Vert < \Vert\varphi\Vert+\varepsilon$ and $\Vert\psi_\varepsilon\Vert = \Vert\varphi\Vert$. This approximate property turns out to be a characterization of the diffuseness.  

Next, we use the approximate result mentioned above to show that for every sequence of diffuse von Neumann algebras $M_i$, $i\in \N$, and every free ultrafilter on $\N$, equation \eqref{eq:main} is solvable in the ultraproduct $([M_i]_\star)/\mathcal{U}$. In particular, it shows that the dual of any ultraproduct of preduals of diffuse von Neumann algebras (which is known to be a von Neumann algebra, see \cite{Groh:JOT84,Raynaud:JOT02}) must be diffuse. Moreover, every nonzero normal positive linear functional on such dual von Neumann algebra has diffuse centralizer.

These results have an application to the so-called (uniform) Daugavet property.  We give a bit detailed explanation on the property for the reader who is not familiar with it. A Banach space $X$ is said to have the \emph{Daugavet property} (see \cite{KSSW}) if every rank-one bounded linear operator $T:X\longrightarrow X$ satisfies the norm identity $\|\Id + T\| = 1 + \|T\|$, in which case, all weakly compact bounded linear operators on $X$ also satisfy the same norm identity. The basic examples of Banach spaces satisfying the Daugavet property are $C(K)$ for perfect $K$ and $L_1(\mu)$ for atomless $\mu$. The non-commutative counterparts to this results were given by T.~Oikhberg \cite{Oik}, proving that the Daugavet property of a $C^*$-algebra is equivalent to its diffuseness and that the predual of a von Neumann algebra has the Daugavet property if and only if the algebra does (i.e., the algebra is diffuse). Latter, it was proved in \cite[\S 4]{GuerreroMartin:JFA05} that the predual of a von Neumann algebra has the Daugavet property if and only if its closed unit ball has no extreme points.

In \cite{BKSW} a stronger version of the Daugavet property called the \emph{uniform Daugavet property} was introduced and it was seen to be equivalent to the fact that every ultrapower of the space has the Daugavet property. The basic examples of spaces having the uniform Daugavet property are again $C(K)$ for perfect $K$ and $L_1(\mu)$ for atomless $\mu$, but there exists a Banach space with the Daugavet property which fails the uniform version, see \cite{KaWe}. The non-commutative counterparts to these positive examples are also true: on the one hand, it is proved in \cite[Theorem~5.2]{GuerreroMartin:JFA05} that a diffuse $C^*$-algebra has the uniform Daugavet property; on the other hand, it was claimed in \cite[Theorem~5.6]{GuerreroMartin:JFA05} that the predual of a diffuse von Neumann algebra has the uniform Daugavet property, but the proof there has a serious problem which the authors do not know how to solve. Our results above show that the dual von Neumann algebra to the ultrapower of the predual of any diffuse von Neumann algebra must be diffuse, and the ultrapower itself turns out to have the Daugavet property. Therefore, we provide an alternative and correct proof of \cite[Theorem~5.6]{GuerreroMartin:JFA05} for preduals of von Neumann algebras.

\section{Norm Equation $\Vert\varphi\pm\psi\Vert = \Vert\varphi\Vert = \Vert\psi\Vert$ in unknown $\psi$} \label{sec:VNA}

Our first goal is to show that in the predual of a diffuse von
Neumann algebra containing no type III$_1$ factor as a direct
summand, the equation entitling the section has solution for
every $\varphi$.

We need to fix notation and to present some preliminary
results. In what follows very basic terminologies (positivity
for linear functionals, etc.) on $C^*$-algebras will be used
freely. Throughout this section, let $M$ be a von Neumann
algebra or $W^*$-algebra, i.e., a (unital) $C^*$-algebra with a (unique
isometric) predual $M_\star$ (Sakai's space-free formulation,
see \cite[Theorem 1.16.7]{Sakai:Book} for its
justification). \emph{We do never assume the separability of $M_\star$.} A bounded linear functional on $M$ is
\emph{normal} if it falls in $M_\star$ or, in other words, if
it is weak$^*$--continuous. A nonzero projection $p \in M$ is
\emph{minimal} if $pMp = \mathbb{C}p$. When no minimal
projection exists, we say that $M$ is \emph{diffuse}. The
\emph{center} of $M$ is denoted by $\mathcal{Z}(M)$ and it is the commutative von Neumann subalgebra of $M$ consisting of those elements that commute with every element of $M$. When
$\mathcal{Z}(M) = \mathbb{C}1$, $M$ is said to be a
\emph{factor}. The next fact is probably well-known, but we
cannot find a suitable reference.

\begin{lemma}\label{L1} There is a unique orthogonal family
$\{z_i\}_{i\in I}$ of minimal projections in $\mathcal{Z}(M)$ such that $\mathcal{Z}(M)z_0$ is diffuse if $z_0 := 1-\sum_{i\in I}z_i \neq 0$. Then $M = Mz_0 \oplus \sum_{i\in I}^\oplus Mz_i$ {\rm(}$\ell^\infty$-direct sum{\rm)} and $\mathcal{Z}(Mz_i) = \mathcal{Z}(M)z_i$ holds for every $i \in \{0\}\sqcup I$. In particular, $Mz_0$ has diffuse center and every $Mz_i$, $i \in I$, becomes a factor.
\end{lemma}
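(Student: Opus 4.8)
The plan is to reduce everything to the structure of the commutative von Neumann algebra $\mathcal{Z}(M)$ and then transport the resulting decomposition back to $M$. First I would let $\{z_i\}_{i\in I}$ be the family of \emph{all} minimal projections of $\mathcal{Z}(M)$. Since any two distinct minimal projections $p,q$ of a commutative algebra satisfy $pq\leq p$ and $pq\leq q$, whence $pq=0$ by minimality, this family is automatically orthogonal and has a supremum $\sum_{i\in I}z_i\in\mathcal{Z}(M)$ (the strong limit of its finite partial sums). Set $z_0:=1-\sum_{i\in I}z_i$. If $z_0\neq 0$, then $\mathcal{Z}(M)z_0$ is diffuse: a minimal projection $0\neq p\leq z_0$ of $\mathcal{Z}(M)z_0$ would satisfy $p\,\mathcal{Z}(M)z_0\,p=\mathcal{Z}(M)p$ by commutativity, hence be a minimal projection of $\mathcal{Z}(M)$ lying below $z_0$; but every such projection is one of the $z_i$ and therefore orthogonal to $z_0$, contradicting $p\neq 0$. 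So this $\{z_i\}_{i\in I}$ has the property required in the statement.

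For uniqueness, suppose $\{z'_j\}_{j\in J}$ is any family with the stated properties. By hypothesis each $z'_j$ is a minimal projection of $\mathcal{Z}(M)$, so $\{z'_j\}_{j\in J}\subseteq\{z_i\}_{i\in I}$. Conversely, given a minimal projection $q$ of $\mathcal{Z}(M)$, write $q=q\bigl(z'_0+\sum_{j\in J}z'_j\bigr)$; by minimality of $q$ exactly one of the pairwise orthogonal subprojections $qz'_0$, $qz'_j$ $(j\in J)$ equals $q$. It cannot be $qz'_0$, for then $q\leq z'_0$ would be a minimal projection of $\mathcal{Z}(M)z'_0$, which is either $\{0\}$ or diffuse. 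Hence $q=qz'_j\leq z'_j$ for some $j$, and minimality of $z'_j$ gives $q=z'_j$. Thus $\{z'_j\}_{j\in J}=\{z_i\}_{i\in I}$.

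Finally, since $z_0$ together with $\{z_i\}_{i\in I}$ is an orthogonal family of central projections summing to $1$, the map $x\mapsto(xz_0,(xz_i)_{i\in I})$ is a normal $*$-isomorphism of $M$ onto the $\ell^\infty$-direct sum $Mz_0\oplus\sum_{i\in I}^\oplus Mz_i$: it is isometric because $\|x\|=\sup\{\|xz_0\|,\sup_i\|xz_i\|\}$ (the finite partial sums of $xz_0+\sum xz_i$ form a bounded net converging strongly to $x$), and surjective because any bounded tuple $(y_0,(y_i))$ is the strong limit of $y_0z_0+\sum_{\mathrm{finite}}y_iz_i\in M$; this is the usual central decomposition (cf.\ \cite{Sakai:Book}). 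For any central projection $z$ one has $\mathcal{Z}(Mz)=\mathcal{Z}(M)z$: the inclusion ``$\supseteq$'' is clear, and if $a=az\in Mz$ commutes with $Mz$, then for every $x\in M$, $ax=a(xz)=(xz)a=xa$, so $a\in\mathcal{Z}(M)$. Hence $\mathcal{Z}(Mz_0)=\mathcal{Z}(M)z_0$ is diffuse, while for $i\in I$ we get $\mathcal{Z}(Mz_i)=\mathcal{Z}(M)z_i=\mathbb{C}z_i$ (as $z_i$ is minimal in $\mathcal{Z}(M)$), so each $Mz_i$ is a factor. The only point needing care is the passage to a possibly uncountable $\ell^\infty$-direct sum — existence of suprema of orthogonal families of projections, strong convergence of the associated partial sums on bounded sets, and weak$^*$-closedness of $M$; everything else is a direct computation inside $\mathcal{Z}(M)$.
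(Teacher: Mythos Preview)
Your proof is correct and follows essentially the same approach as the paper's: both arguments produce the family of minimal central projections, show the complementary piece has diffuse center, and deduce the direct-sum and center identities. The only cosmetic difference is that the paper picks a \emph{maximal orthogonal} family of minimal projections via Zorn, whereas you take the set of \emph{all} minimal projections of $\mathcal{Z}(M)$ and verify orthogonality directly; these are the same family in a commutative algebra, and you spell out the ``rest is immediate'' part (the $\ell^\infty$-decomposition and $\mathcal{Z}(Mz)=\mathcal{Z}(M)z$) in more detail than the paper does.
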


\begin{proof} Choose a maximal family $\{z_i\}_{i\in I}$
of mutually orthogonal, nonzero projections in $\mathcal{Z}(M)$
such that $\mathcal{Z}(M)z_i = \mathbb{C}z_i$.
Set $z_0 := 1 - \sum_{i\in I}z_i$. The maximality of $\{z_i\}_{i\in I}$ forces either $z_0 = 0$ or that $\mathcal{Z}(M)z_0$ must be diffuse.
Let $e \in \mathcal{Z}(M)$ be an arbitrary minimal projection. Since $z_i e = e z_i$ is a projection, the minimality of $e$ forces that there is a unique $i(e) \in I$ with $e = z_{i(e)} e = z_{i(e)}$.
It immediately follows that the family $\{z_i\}_{i\in I}$ is uniquely determined. The rest is immediate (by \cite[Proposition 2.2.1]{Sakai:Book} etc.).
\end{proof}

The factors are classified into those of \emph{type I}
(possessing minimal projections), those of \emph{type II} (no
minimal projection but nonzero normal tracial positive linear
functionals exist) and those of \emph{type III} (no minimal
projection and no nonzero normal tracial positive linear
functional). One easily observes (cf.~the proof of Lemma~\ref{L3} below) that $M$ is diffuse if and only if every
$Mz_i$, $i \in I$, is not of type I in the notations of Lemma~\ref{L1}. The type III factors are further classified into the subclasses of \emph{type III$_\lambda$}, $0 \leq \lambda \leq
1$ based on the so-called modular theory. Although this finer
III$_\lambda$--classification theory due to Connes plays a key
r\^{o}le in what follows, we do not review it and refer to
\cite{Connes:ASENS73},\cite[Ch.~XII]{Takesaki:Book2} instead.
The next lemma is well-known (see e.g.~\cite[Lemma 11, Lemma
12]{Ueda:MathScand01}), but we do give its detailed proof with
explicit references for the reader's convenience. Here a
positive $\varphi \in M_\star$ is said to be \emph{faithful} if
$\varphi(x^* x) = 0$ implies $x=0$ for $x \in M$. In what
follows, the \emph{central support} (in $M$) of a projection $p
\in M$, i.e., the smallest projection $z \in \mathcal{Z}(M)$
with $z \geq p$, is denoted by $c_M(p)$.

\begin{lemma}\label{L2} If $M$ is a factor and has
a faithful positive $\varphi\in M_\star$ whose centralizer
$M_\varphi := \{x \in M\,|\, \text{$\varphi(xy) =
\varphi(yx)$ for all $y\in M$}\}$ is not diffuse,
then $M$ must be of either type I or type III$_1$.
\end{lemma}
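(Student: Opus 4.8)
The plan is to reduce the lemma to the case of a trivial centralizer and then to read off the type of the algebra from its modular flow via Connes' classification. Since $M_\varphi$ is not diffuse it contains a minimal projection $e$; as $M$ is a factor, $e$ has full central support, so $N:=eMe$ is again a factor and $\psi:=\varphi(e\,\cdot\,e)/\varphi(e)$ is a faithful normal state on $N$. Because $e$ belongs to $M_\varphi$, the modular automorphism group of $\varphi$ leaves $eMe$ globally invariant and restricts there to that of $\psi$, whence the description of the centralizer as the fixed-point algebra of the modular flow gives $N_\psi=eMe\cap M_\varphi=e\,M_\varphi\,e=\mathbb{C}e$. Since reduction by a projection of full central support preserves the Murray--von Neumann type and, in the type III case, the finer classification into the types III$_\lambda$ (see \cite{Connes:ASENS73} and \cite[Ch.~XII]{Takesaki:Book2}), it suffices to prove: \emph{a factor $N$ admitting a faithful normal state $\psi$ with $N_\psi=\mathbb{C}1$ is either $\mathbb{C}1$ or a factor of type III$_1$.}

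I would dispose of the semifinite case first. If $N\neq\mathbb{C}1$ were semifinite it would carry a faithful normal semifinite trace $\tau$, and then $\psi=\tau(h\,\cdot\,)$ for a nonsingular positive $h$ affiliated with $N$, so that $\sigma^\psi_t=\mathrm{Ad}(h^{it})$ and $N_\psi$ contains the abelian von Neumann algebra generated by $h$. This algebra is nontrivial unless $h$ is a scalar, in which case $\tau$ is finite and $\psi$ is tracial, forcing $N_\psi=N\neq\mathbb{C}1$; either way $N_\psi\neq\mathbb{C}1$, a contradiction. Hence $N$ is $\mathbb{C}1$ --- so $M$ is of type I and we are done --- or $N$ is of type III.

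Assume then that $N$ is of type III. This is where the hypothesis $N_\psi=\mathbb{C}1$ does its real work: the fixed-point algebra of $\sigma^\psi$ being $\mathbb{C}1$ contains no projection other than $1$, so the Connes spectrum of the modular flow coincides with its (Arveson) spectrum, $\Gamma(\sigma^\psi)=\mathrm{Sp}(\sigma^\psi)$. Now $\Gamma(\sigma^\psi)$ is always a closed subgroup of $\mathbb{R}$ --- hence equal to $\{0\}$, to $a\mathbb{Z}$ for some $a>0$, or to $\mathbb{R}$ --- and for a type III factor these three cases correspond, by Connes' classification (\cite{Connes:ASENS73}, \cite[Ch.~XII]{Takesaki:Book2}), to $N$ being of type III$_0$, of type III$_\lambda$ with $\lambda=e^{-a}$, or of type III$_1$, respectively. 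The first case cannot occur here, for $\Gamma(\sigma^\psi)=\{0\}$ would give $\mathrm{Sp}(\sigma^\psi)=\{0\}$, i.e.\ trivial modular flow and hence $\psi$ tracial, which a type III factor does not admit; the third case is exactly the conclusion sought. So the whole matter reduces to excluding the middle case, which I regard as the main obstacle.

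To exclude it, suppose $\Gamma(\sigma^\psi)=a\mathbb{Z}$ and put $\lambda:=e^{-a}\in(0,1)$. The number $-a$ is an isolated point of $\mathrm{Sp}(\sigma^\psi)=a\mathbb{Z}$, so the corresponding spectral subspace $\{v\in N:\sigma^\psi_t(v)=\lambda^{it}v\ \text{for all}\ t\}$ is nonzero; picking a nonzero $v$ in it, both $v^*v$ and $vv^*$ lie in $N_\psi=\mathbb{C}1$ and are nonzero, so after rescaling $v$ is a unitary with $\sigma^\psi_t(v)=\lambda^{it}v$. Then the Connes cocycle of $\psi\circ\mathrm{Ad}(v^*)$ relative to $\psi$ equals $(D(\psi\circ\mathrm{Ad}(v^*)):D\psi)_t=v^*\sigma^\psi_t(v)=\lambda^{it}1$, which is exactly the cocycle of $\lambda\psi$; by the uniqueness in Connes' Radon--Nikodym theorem, $\psi\circ\mathrm{Ad}(v^*)=\lambda\psi$, and evaluating both sides at $1$ gives $\psi(1)=\psi(v^*v)=\lambda\psi(1)$, i.e.\ $\lambda=1$ --- a contradiction. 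Therefore $N$ is of type III$_1$, and, by the reduction of the first paragraph, $M$ is of type III$_1$ whenever it is not of type I. Apart from the modular-theoretic inputs just used --- the Connes spectrum being a subgroup, the nonvanishing of the spectral subspace at an isolated spectral point, and the Radon--Nikodym identification of the cocycle --- together with the invariance of the III$_\lambda$-type under reduction, all of which are classical, there is nothing further to check.
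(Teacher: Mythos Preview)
Your proof is correct and takes a genuinely different route from the paper's. The paper argues the contrapositive case by case: for type II it picks a MASA of $M$ containing all $h^{it}$ (with $h$ the Radon--Nikodym derivative of $\varphi$ against a trace) and shows that a minimal projection of $M_\varphi$ would then be minimal in $M$; for type III$_\lambda$ with $\lambda<1$ it invokes Connes' structural theorems \cite[Thm.~4.2.1(a), Thm.~5.2.1(a)]{Connes:ASENS73} asserting that $M_\varphi$ contains a MASA of $M$, and derives a similar contradiction. Your argument instead first passes to the corner $eMe$ by a minimal projection $e\in M_\varphi$ to reduce to a factor with \emph{trivial} centralizer, and then reads off the type from the Connes spectrum: the equality $\Gamma(\sigma^\psi)=\mathrm{Sp}(\sigma^\psi)$ forced by $N_\psi=\mathbb{C}1$ rules out III$_0$ (trivial spectrum would make $\psi$ tracial), and a unitary eigenvector in the spectral subspace at an isolated point yields, via the KMS/Connes--cocycle computation, the scaling identity $\psi\circ\mathrm{Ad}(v^{\pm1})=\lambda^{\pm1}\psi$, which is impossible for a state unless $\lambda=1$. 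The paper's proof leans on the rather deep existence of a MASA of $M$ inside $M_\varphi$ for III$_{\lambda<1}$, whereas your route uses only the Connes-spectrum formalism, the cocycle/Radon--Nikodym theorem, and the invariance of the III$_\lambda$ subtype under reduction; in that sense your argument is more self-contained and more transparently explains \emph{why} trivial (or non-diffuse) centralizers force type III$_1$. One cosmetic point: depending on conventions, the cocycle of $\psi\circ\mathrm{Ad}(v^*)$ comes out as $\lambda^{-it}1$ rather than $\lambda^{it}1$, but either way evaluation at $1$ gives $\lambda=1$, so the conclusion is unaffected.
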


\begin{proof} It suffices to prove that if $M$ is a factor of either type II or type III$_\lambda$ with $\lambda \neq 1$, then the centralizer $M_\varphi$ of any faithful positive $\varphi \in M_\star$ must be diffuse.

Firstly, assume that $M$ is of type II. By \cite[Theorem VIII.3.14, Theorem VIII.2.11]{Takesaki:Book2} one can find a positive selfadjoint (possibly unbounded) operator $h$ affiliated with $M$ so that $\sigma_t^\varphi = \mathrm{Ad}h^{it}$, $t \in \mathbb{R}$, and hence $M_\varphi = \{x \in M\,|\,\text{$xh^{it} = h^{it}x$ for every $t\in\mathbb{R}$}\}$. On contrary, suppose that there exists a minimal projection $e$ in $M_\varphi$. Choose a MASA $A$ in $M$ that contains $\{h^{it}\,|\,t\in\mathbb{R}\}\cup\{e\}$. Clearly $A$ sits in $M_\varphi$. Then $Ae \subseteq eM_\varphi e = \mathbb{C}e$ so that $Ae = \mathbb{C}e$, that is, $e$ is minimal in $A$. Since $A$ is a MASA in $M$, $eMe$ falls in $A$, implying that $e$ is minimal in $M$, a contradiction. Consequently, $M_\varphi$ must be diffuse.

Secondly, assume that $M$ is of type III$_\lambda$ with $0 \leq \lambda < 1$. By \cite[Theorem 4.2.1 (a); Theorem 5.2.1 (a)]{Connes:ASENS73} there exists a MASA $A$ in $M$ such that $A \subset M_\varphi \subset M$. On contrary, suppose that $M_\varphi$ has a minimal projection, say $e$, that is $eM_\varphi e = \mathbb{C}e$. By \cite[Proposition 6.4.3]{KadisonRingrose:Book2} $M_\varphi z$ with $z := c_{M_\varphi}(e)$ is a factor which possesses a minimal projection $e$ and a finite faithful normal trace $\varphi\!\upharpoonright_{M_\varphi z}$; hence it must be a type I$_n$ factor with $n$ finite (or a finite dimensional factor) by \cite[Corollary 6.5.3, Remark 6.5.4]{KadisonRingrose:Book2}. Note that $z$ falls in $A$, since $A$ is a MASA in $M_\varphi$. Hence $Az$ must be a MASA in $M_\varphi z$ so that $Az = \sum_{i=1}^n \mathbb{C}e_i$ with minimal projections $e_i$'s by \cite[Exercise 6.9.23]{KadisonRingrose:Book2} (or \cite[Lemma 3.7]{Kadison:AmerJMath84}). Therefore, $e_i Me_i$ must fall in $A$, implying $e_i M e_i = \mathbb{C}e_i$ a contradiction since $M$ is of type III. Consequently, $M_\varphi$ must be diffuse.
\end{proof}

The \emph{centralizer} $M_\varphi$ of a given (not necessarily faithful) nonzero positive $\varphi \in M_\star$ is defined to be the usual one (see Lemma~\ref{L2}) of the faithful $\varphi\!\upharpoonright_{s(\varphi)Ms(\varphi)}$, where $s(\varphi)$ denotes the \emph{support} of $\varphi$, i.e., $1-p$ of the greatest projection $p \in M$ with $\varphi(p) = 0$. It is easy to see that $\varphi(x) = \varphi(s(\varphi)x) = \varphi(x s(\varphi))$ for $x \in M$ and that a positive $\varphi \in M_\star$ is faithful if and only if $s(\varphi) = 1$. Note that this definition of centralizers is probably not standard.

\begin{lemma}\label{L3} If $\mathcal{Z}(M)$ is diffuse or if $M$ is a factor of neither type I nor type III$_1$, then the centralizer $M_\varphi$ of any nonzero positive $\varphi \in M_\star$ becomes diffuse.
\end{lemma}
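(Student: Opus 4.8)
The plan is to reduce to the faithful case and then split according to the two hypotheses. First I would put $e := s(\varphi)$, $N := eMe$ and $\psi := \varphi\!\upharpoonright_N$, so that $\psi$ is a faithful positive element of $N_\star$ and, by the very definition of the centralizer of a non-faithful functional, $M_\varphi = N_\psi$. The two structural facts about the reduced algebra $N$ that I would invoke are: (i) the map $z \mapsto ze$ is a $*$-isomorphism of $\mathcal{Z}(M)c_M(e)$ onto $\mathcal{Z}(N)$ (a standard property of reductions by a projection); and (ii) if $M$ is a factor, then $c_M(e) = 1$, so $N$ is again a factor, and it is of the same Murray--von Neumann / Connes type as $M$ (for type III one uses that in a type III factor every nonzero projection is equivalent to $1$, so $N \cong M$; for type II one only needs that $N$ is again semifinite without minimal projections). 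Consequently, if $\mathcal{Z}(M)$ is diffuse then $\mathcal{Z}(M)c_M(e)$ is diffuse (a reduction of a diffuse abelian von Neumann algebra is diffuse) and hence so is $\mathcal{Z}(N)$; and if $M$ is a factor of neither type I nor type III$_1$, then so is $N$.

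In the factor case we are then done by Lemma~\ref{L2}: its contrapositive says precisely that a faithful positive normal functional on a factor that is of neither type I nor type III$_1$ has diffuse centralizer, whence $M_\varphi = N_\psi$ is diffuse.

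It remains to treat the case in which $\mathcal{Z}(N)$ is diffuse. Here I would argue by contradiction: suppose $M_\varphi = N_\psi$ has a minimal projection $p$, i.e.\ $pN_\psi p = \mathbb{C}p$. Since $\psi$ is faithful, $\mathcal{Z}(N) \subseteq N_\psi$, so for every projection $z \in \mathcal{Z}(N)$ the element $zp$ lies in $N_\psi$ and is a projection dominated by $p$; hence $zp \in pN_\psi p = \mathbb{C}p$ forces $zp \in \{0,p\}$. A routine central-support computation then shows that the only projections of $\mathcal{Z}(N)$ below $c_N(p)$ are $0$ and $c_N(p)$; since $p \neq 0$ we have $c_N(p) \neq 0$, so $c_N(p)$ is a nonzero minimal projection of $\mathcal{Z}(N)$, contradicting its diffuseness. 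Therefore $M_\varphi$ is diffuse.

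The only genuinely delicate points are bookkeeping ones rather than deep: invoking the reduction facts (i)--(ii) with the correct central supports, checking that passing from $\varphi$ to $\psi$ does not change the centralizer, and noticing that faithfulness of $\psi$ is exactly what makes $\mathcal{Z}(N) \subseteq N_\psi$ available in the last step. Everything substantial about type III$_1$ is already packaged inside Lemma~\ref{L2}.
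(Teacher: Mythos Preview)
Your argument is correct and follows the paper's route almost exactly: reduce to the faithful case on $N = s(\varphi)Ms(\varphi)$ via the isomorphism $\mathcal{Z}(M)c_M(s(\varphi)) \cong \mathcal{Z}(N)$ and type-preservation under reduction, then invoke Lemma~\ref{L2} in the factor case and exploit $\mathcal{Z}(N) \subseteq N_\psi$ to reach a contradiction in the diffuse-center case. Two small remarks. First, the paper extracts the diffuse-center contradiction slightly differently: it takes $z := c_{N_\psi}(p)$ (central support \emph{inside the centralizer}), observes that $N_\psi z$ is a finite-dimensional factor, and then notes $\mathcal{Z}(N)z \subseteq \mathcal{Z}(N_\psi z) = \mathbb{C}z$, impossible for diffuse $\mathcal{Z}(N)$; your route via $c_N(p)$ being minimal in $\mathcal{Z}(N)$ is a touch more elementary and avoids the type~I structure theorem. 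Second, your parenthetical justification for type-preservation in the type~III case (``every nonzero projection is equivalent to $1$, so $N \cong M$'') needs $M$ to be $\sigma$-finite, which the paper explicitly does not assume; here $e = s(\varphi)$ is $\sigma$-finite but $1$ need not be, so $e \sim 1$ can fail. The conclusion that $eMe$ has the same Connes type is nevertheless true, and the paper handles it by citing \cite[Corollary~3.2.8]{Connes:ASENS73} directly rather than arguing via an isomorphism.
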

\begin{proof} By \cite[Proposition 2.2.11]{Sakai:Book} one has $\mathcal{Z}(s(\varphi)Ms(\varphi)) = \mathcal{Z}(M)s(\varphi)$, and it is easy to see, by the definition of $c_M(s(\varphi))$, that $x \mapsto xs(\varphi)$ gives an injective $*$-homomorphism from $\mathcal{Z}(M)c_M(s(\varphi))$ onto $\mathcal{Z}(M)s(\varphi)$. This and \cite[Corollary 3.2.8]{Connes:ASENS73} show that $s(\varphi)Ms(\varphi)$ is again either a von Neumann algebra with diffuse center or a factor of neither type I nor type III$_1$. Hence we may and do assume that $s(\varphi)=1$, that is, $\varphi$ is faithful, by replacing $M$ by $s(\varphi)Ms(\varphi)$.

Firstly, assume that $\mathcal{Z}(M)$ is diffuse. On contrary, suppose that $M_\varphi$ has a minimal projection, say $e$. By \cite[Proposition 6.4.3]{KadisonRingrose:Book2} $M_\varphi z$ with $z := c_{M_\varphi}(e)$ must be a finite dimensional factor (see the final paragraph in the proof of Lemma \ref{L2}), a contradiction since $\mathbb{C}z \neq \mathcal{Z}(M)z \subseteq \mathcal{Z}(M_\varphi z)$. Therefore, $M_\varphi$ has no minimal projection.

Secondly, assume that $M$ is a factor of neither type I nor type III$_1$. Since $\varphi$ is faithful, the desired assertion immediately follows from Lemma~\ref{L2}.
\end{proof}

For $x \in M$ and $\varphi \in M_\star$ we define $x\varphi, \varphi x \in M_\star$ by $(x\varphi)(y) := \varphi(yx)$ and $(\varphi x)(y) := \varphi(xy)$ for $y \in M$. It is known, see \cite[\S1.14]{Sakai:Book}, that any $\varphi \in M_\star$ admits a unique \emph{polar decomposition} $\varphi = v|\varphi|$, where $|\varphi| \in M_\star$ is positive and $v$ is a partial
isometry in $M$ with $v^* v = s(|\varphi|)$. The core idea of the proof of the next proposition comes from the sketch given
to \cite[Lemma 3.6]{Ueda:BLMS11} there.

\begin{theorem}\label{T1}
Let $M$ be a von Neumann algebra and $M_\star$ be its unique
predual. Consider the decomposition $M = Mz_0 \oplus \sum_{i\in
I}^\oplus Mz_i$ as given in Lemma~\ref{L1}. If every $Mz_i$, $i
\in I$, is of neither type I nor type III$_1$, then every $\varphi \in M_\star$ has $\psi \in M_\star$ so that
$\Vert\varphi\pm\psi\Vert = \Vert\varphi\Vert = \Vert\psi\Vert$.
\end{theorem}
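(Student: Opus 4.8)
The plan is to reduce, summand by summand, to a von Neumann algebra whose relevant centralizer is diffuse, and then --- following the idea announced in the introduction --- to exhibit $\psi$ explicitly in the form $\varphi e_+-\varphi e_-$ for a suitable pair of orthogonal projections $e_\pm$ in that centralizer.

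Since $M=Mz_0\oplus\sum_{i\in I}^{\oplus}Mz_i$ is an $\ell^\infty$-direct sum, its predual decomposes as the corresponding $\ell^1$-direct sum, so $\varphi=\varphi_0+\sum_{i\in I}\varphi_i$ with $\varphi_j\in(Mz_j)_\star$ and $\|\varphi\|=\|\varphi_0\|+\sum_{i\in I}\|\varphi_i\|$ (only countably many $\varphi_j$ being nonzero). If for each $j\in\{0\}\sqcup I$ we can find $\psi_j\in(Mz_j)_\star$ with $\|\varphi_j\pm\psi_j\|=\|\varphi_j\|=\|\psi_j\|$ (taking $\psi_j=0$ when $\varphi_j=0$), then $\psi:=\sum_j\psi_j$ lies in $M_\star$ and solves \eqref{eq:main} for $\varphi$, because $\|\psi\|=\sum_j\|\psi_j\|=\|\varphi\|$ and $\|\varphi\pm\psi\|=\sum_j\|\varphi_j\pm\psi_j\|=\|\varphi\|$. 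Now $Mz_0$ has diffuse center and every $Mz_i$ is a factor of neither type I nor type III$_1$ (Lemma~\ref{L1} and the hypothesis), so Lemma~\ref{L3} tells us that $(Mz_j)_{|\varphi_j|}$ is diffuse whenever $\varphi_j\neq0$. Hence it is enough to prove the statement for a single von Neumann algebra $M$ and a nonzero $\varphi\in M_\star$ whose centralizer $M_{|\varphi|}$ is diffuse.

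Normalize $\|\varphi\|=1$, let $\varphi=v\rho$ be the polar decomposition, write $\rho:=|\varphi|$ and $p:=s(\rho)=v^*v$. The restriction $\rho\!\upharpoonright_{M_\rho}$ is a faithful normal tracial state with unit $p$, and $M_\rho$ is diffuse. A MASA $A$ of $M_\rho$ is then diffuse as well (a minimal projection of $A$ would be minimal in $M_\rho$), so $A\cong L_\infty(\mu)$ for an atomless $\mu$ with $\rho\!\upharpoonright_A$ an atomless probability measure; splitting $p$ gives orthogonal projections $e_+,e_-\in A\subseteq M_\rho$ with $e_++e_-=p$ and $\rho(e_+)=\rho(e_-)=\tfrac12$. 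Set $u:=e_+-e_-\in M_\rho$, a selfadjoint partial isometry with $u^*u=uu^*=p$, and put $\psi:=\varphi u=\varphi e_+-\varphi e_-\in M_\star$. Since $e_\pm,u\in M_\rho$ can be moved past $\rho$ freely, $\rho e_\pm$ is positive with $\|\rho e_\pm\|=\rho(e_\pm)=\tfrac12$ and $s(\rho e_\pm)=e_\pm$, and $\varphi e_\pm=(ve_\pm)(\rho e_\pm)$ is the polar decomposition of $\varphi e_\pm$, so $\|\varphi e_\pm\|=\tfrac12$. As $\varphi=\varphi p=\varphi e_++\varphi e_-$ we get $\varphi\pm\psi=2\,\varphi e_\pm$, hence $\|\varphi\pm\psi\|=1=\|\varphi\|$; and $\varphi u=(vu)\rho$ with $vu$ a partial isometry satisfying $(vu)^*(vu)=u^*v^*vu=u^*pu=p=s(\rho)$, the polar decomposition of $\psi$, so $\|\psi\|=\|\rho\|=1$.

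The substantive input is the diffuseness of the centralizer, which is Lemma~\ref{L3} and already available; once that is granted the construction is essentially forced. The only place needing real care is the last computation: one must keep the two module actions $x\mapsto x\varphi$ and $x\mapsto\varphi x$ apart and use repeatedly that elements of $M_\rho$ commute with $\rho$ in the appropriate sense, in order to identify the polar decompositions of $\varphi e_\pm$ and of $\psi$ and read off their norms. The reduction to a single summand and the existence of the halving projections $e_\pm$ are routine.
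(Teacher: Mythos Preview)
Your proof is correct and follows essentially the same route as the paper's: reduce via the $\ell^1$-decomposition of $M_\star$ to a single summand, invoke Lemma~\ref{L3} to get a diffuse centralizer $M_{|\varphi|}$, pick a diffuse MASA therein, halve the support $s(|\varphi|)$ into projections $e_\pm$ of equal $|\varphi|$-mass, and set $\psi=v(e_+-e_-)|\varphi|=\varphi e_+-\varphi e_-$. The only cosmetic differences are that the paper embeds an explicit copy of $(L_\infty[0,1],\mathrm{Leb})$ to find $e_\pm$ and reads off $\|\varphi\pm\psi\|$ from the polar decomposition $\varphi\pm\psi=ve_\pm(2e_\pm|\varphi|)$ in one step, whereas you compute $\|\varphi e_\pm\|$ first; your identity $\varphi u=vu|\varphi|$ (used implicitly) holds precisely because $u\in M_{|\varphi|}$.
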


\begin{proof} Let $M = Mz_0 \oplus \sum_{i\in I}^\oplus Mz_i$ be as in Lemma~\ref{L1}. Accordingly one can decompose $\varphi = \varphi_0 + \sum_{i\in I} \varphi_i$ with $\varphi_i := \varphi\!\upharpoonright_{Mz_i}$, $i\in\{0\}\sqcup I$, and trivially $\Vert\varphi\Vert = \Vert \varphi_0\Vert + \sum_{i\in I} \Vert\varphi_i\Vert$ holds. (In this way, $M_\star$ is identified with $(Mz_0)_\star\oplus\sum_{i\in I}^\oplus(Mz_i)_\star$ ($\ell^1$-direct sum).) Consequently, it suffices to find $\psi_i \in (Mz_i)_\star$ in such a way that $\Vert \varphi_i \pm \psi_i\Vert = \Vert \varphi_i \Vert = \Vert \psi_i\Vert$ for each $i \in \{0\}\sqcup I$. Hence we will prove the desired assertion for nonzero $\varphi$ when $\mathcal{Z}(M)$ is diffuse or when $M$ is a factor of neither type I nor type III$_1$.

Let $\varphi = v|\varphi|$ be the polar decomposition, and by Lemma~\ref{L3} $M_{|\varphi|}$ must be diffuse. Choose a MASA $A$ in $M_{|\varphi|}$ with unit $s(|\varphi|) \in A$. If there existed a minimal projection $e$ in $A$, then $eM_{|\varphi|}e$ would sit in $A$ so that $eM_{|\varphi|}e = \mathbb{C}e$, a contradiction. Hence $A$ must be diffuse. Consider the faithful normal state $\chi := |\varphi|(s(|\varphi|))^{-1}|\varphi|\!\upharpoonright_A$ on $A$. Since $A$ is diffuse, one can construct a unital von Neumann subalgebra $C$ of $A$ in such a way that $(C,\chi\!\upharpoonright_C) \cong (L_\infty[0,1],\text{Leb})$ (see e.g.~\cite[Lemma 4.14]{Ueda:arXiv:1203.1806}). Hence one can easily find (by looking at $(L^\infty[0,1],\text{Leb})$) two orthogonal projections $e_\pm \in M_{|\varphi|}$ in such a way that $e_+ + e_- = s(|\varphi|)$ and $|\varphi|(e_\pm) = |\varphi|(s(|\varphi|))/2$. Consider the self-adjoint unitary $u := e_+ - e_- \in M_{|\varphi|}$, and $\psi := vu|\varphi| \in M_\star$, which clearly becomes a polar decomposition, since $(vu)^* vu = s(|\varphi|)$. Hence $\Vert\psi\Vert = |\varphi|(s(|\varphi|)) = \Vert\varphi\Vert$. Since $\varphi \pm \psi = v(s(|\varphi|)\pm u)|\varphi| = ve_\pm(2e_\pm|\varphi|)$ and the $e_\pm$'s falls in $M_{|\varphi|}$, we get $|\varphi\pm\psi| = 2e_\pm|\varphi|$ by the uniqueness of polar decompositions (see \cite[Theorem 1.14.4]{Sakai:Book}). Therefore, $\Vert\varphi\pm\psi\Vert  = 2|\varphi|(e_\pm) = 2(|\varphi|(s(|\varphi|))/2) = \Vert\varphi\Vert$.
\end{proof}

It is natural to ask whether or not the conclusion of Theorem~\ref{T1} still holds true when type III$_1$ factors appear as direct summands. As a consequence of the next theorem, we will see that it is not always so.

As usual, for a given $\varphi \in M_\star$ its adjoint $\varphi^* \in M_\star$ is defined by $\varphi^*(x) := \overline{\varphi(x^*)}$ for $x \in M$.

\begin{theorem}\label{T2} Let $M$ be a von Neumann algebra,
$M_\star$ be its unique predual, and $\varphi \in M_\star$ be nonzero.
Then the following are equivalent{\rm:}
\begin{itemize}
\item[(i)] $\varphi$ is the center of a segment of
length $2\Vert\varphi\Vert$ in the closed ball of radius
$\Vert\varphi\Vert$.
\item[(ii)] There is $\psi \in M_\star$ such that
$\Vert\varphi\pm\psi\Vert = \Vert\varphi\Vert = \Vert\psi\Vert$.
\item[(iii)] There are two orthogonal projections $e_\pm \in M_{|\varphi|}$ such that $e_+ + e_- = s(|\varphi|)$, $|\varphi|(e_\pm) = |\varphi|(s(|\varphi|))/2 = \Vert\varphi\Vert/2$. If this is the case, then $\psi$ can be chosen to be $v(e_+ - e_-)|\varphi| = \varphi e_+ - \varphi e_-$,  where $\varphi=v|\varphi|$ is the polar decomposition of $\varphi$.
\end{itemize}
In particular, for a given $\varphi \in M_\star$, the solutions $\psi \in M_\star$ of the norm equation $\Vert\varphi\pm\psi\Vert = \Vert\varphi\Vert = \Vert\psi\Vert$ are given by $\psi = \varphi e_+ - \varphi e_-$ with orthogonal projections $e_\pm$ in $M_{|\varphi|}$ satisfying that $e_+ + e_- = s(|\varphi|)$ and $|\varphi|(e_\pm) = \Vert\varphi\Vert/2$.
\end{theorem}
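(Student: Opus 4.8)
\medskip

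\emph{How I would prove Theorem~\ref{T2}.} The equivalence (i)$\Leftrightarrow$(ii) is the elementary reformulation already spelled out in the Introduction: $\Vert\varphi\pm\psi\Vert=\Vert\varphi\Vert=\Vert\psi\Vert$ says precisely that $\varphi=\tfrac12(\varphi+\psi)+\tfrac12(\varphi-\psi)$ is the midpoint of a segment of length $2\Vert\varphi\Vert$ in the closed ball of radius $\Vert\varphi\Vert$, and conversely $\psi$ is half the difference of the two endpoints. For (iii)$\Rightarrow$(ii) I would simply repeat the computation in the proof of Theorem~\ref{T1}: given orthogonal projections $e_\pm\in M_{|\varphi|}$ with $e_++e_-=s(|\varphi|)$ and $|\varphi|(e_\pm)=\Vert\varphi\Vert/2$, put $u:=e_+-e_-$ and $\psi:=v(e_+-e_-)|\varphi|$; since $e_\pm$ lie in the centralizer one checks that $\psi=\varphi e_+-\varphi e_-$, and since $(vu)^*(vu)=s(|\varphi|)$ both $vu|\varphi|$ and $ve_\pm(2e_\pm|\varphi|)$ are polar decompositions, so $\Vert\psi\Vert=|\varphi|(s(|\varphi|))=\Vert\varphi\Vert$ and $\Vert\varphi\pm\psi\Vert=2|\varphi|(e_\pm)=\Vert\varphi\Vert$. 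Thus the whole content lies in (ii)$\Rightarrow$(iii).

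Fix a solution $\psi$ and set $\alpha:=\Vert\varphi\Vert$, $\varphi=v|\varphi|$ the polar decomposition, $p:=s(|\varphi|)=v^*v$, $\varphi_\pm:=\varphi\pm\psi$. \emph{Step 1.} Since $\varphi(v^*)=|\varphi|(v^*v)=\alpha$ while $|\varphi_\pm(v^*)|\le\Vert\varphi_\pm\Vert\,\Vert v^*\Vert=\alpha$, writing $\psi(v^*)=a+ib$ gives the two inequalities $(\alpha\pm a)^2+b^2\le\alpha^2$; adding them forces $a=b=0$, so $\psi(v^*)=0$ and $\varphi_\pm(v^*)=\alpha=\Vert\varphi_\pm\Vert$. \emph{Step 2 (rigidity lemma).} I would prove: if $\rho\in M_\star$ and a partial isometry $u\in M$ satisfy $\rho(u^*)=\Vert\rho\Vert$, then $u$ is the partial-isometry part of the polar decomposition of $\rho$; precisely $\rho=u|\rho|$, $s(|\rho|)\le u^*u$ and $u^*\rho=|\rho|$. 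Writing $\rho=u_0|\rho|$, $q:=s(|\rho|)=u_0^*u_0$, and using $|\rho|(z)=|\rho|(qzq)$, the hypothesis reads $|\rho|(c)=\Vert\rho\Vert$ with $c:=q\,u^*u_0\,q\in qMq$, $\Vert c\Vert\le1$; faithfulness of $|\rho|$ on $qMq$ forces $\mathrm{Re}(c)=q$, and then $\Vert c\Vert\le1$ forces $c=q$. Taking adjoints and multiplying gives $q\,u^*(u_0u_0^*)u\,q=q$, hence $q\,u^*u\,q=q$, i.e. $q\le u^*u$; likewise $|\rho|(u_0^*u)=|\rho|(u^*u_0)=|\rho|(u^*u)=\Vert\rho\Vert=|\rho|(q)$, so $|\rho|((u_0-u)^*(u_0-u))=0$, and Cauchy--Schwarz for the positive functional $|\rho|$ yields $u_0|\rho|=u|\rho|$; finally $u^*\rho=|\rho|$ since $1-u^*u$ is orthogonal to $s(|\rho|)$.

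To finish, apply Step 2 with $u=v$ (legitimate for $\varphi_+$ and $\varphi_-$ by Step 1): $\varphi_\pm=v|\varphi_\pm|$, $s(|\varphi_\pm|)\le p$, and $v^*\varphi_\pm=|\varphi_\pm|$, while also $v^*\varphi=|\varphi|$. As $\sigma\mapsto v^*\sigma$ is linear, applying it to $\varphi=\tfrac12(\varphi_++\varphi_-)$ gives the crucial \emph{linearized} identity $|\varphi|=\tfrac12(|\varphi_+|+|\varphi_-|)$ between the positive parts. On the other hand $\psi=\tfrac12(\varphi_+-\varphi_-)=v\cdot\tfrac12(|\varphi_+|-|\varphi_-|)$ forces $\alpha=\Vert\psi\Vert\le\tfrac12\Vert\,|\varphi_+|-|\varphi_-|\,\Vert\le\tfrac12(\Vert\varphi_+\Vert+\Vert\varphi_-\Vert)=\alpha$, and equality in the norm of a difference of positive functionals means $|\varphi_+|\perp|\varphi_-|$. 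Hence $e_\pm:=s(|\varphi_\pm|)$ are mutually orthogonal projections with $e_++e_-=s(|\varphi_+|+|\varphi_-|)=s(|\varphi|)=p$; using the linearized identity and that $|\varphi_\pm|$ is supported in $e_\pm$, one computes $|\varphi|(e_+xe_-)=0=|\varphi|(e_-xe_+)$ for all $x$, i.e. $e_\pm\in M_{|\varphi|}$, as well as $|\varphi|(e_\pm)=\tfrac12|\varphi_\pm|(e_\pm)=\tfrac12\Vert\varphi_\pm\Vert=\Vert\varphi\Vert/2$; and the same computation identifies $\tfrac12|\varphi_\pm|$ with $|\varphi|e_\pm$, so that $\psi=v(|\varphi|e_+-|\varphi|e_-)=\varphi e_+-\varphi e_-$. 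This proves (iii), and the final ``in particular'' clause is the conjunction of the two implications.

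The only genuinely delicate point is Step 2 together with the way it is used, namely the passage from the purely metric hypothesis $\Vert\varphi\pm\psi\Vert=\Vert\varphi\Vert=\Vert\psi\Vert$ to the algebraic identity $|\varphi|=\tfrac12(|\varphi_+|+|\varphi_-|)$ for the \emph{positive} parts; once that linearization is in hand, producing the projections $e_\pm$ inside the centralizer is routine bookkeeping with supports of positive functionals.
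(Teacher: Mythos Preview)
Your proof is correct and follows essentially the same route as the paper's: both reduce (ii)$\Rightarrow$(iii) to the identity $|\varphi|=\tfrac12(|\varphi_+|+|\varphi_-|)$ with $\varphi_\pm=v|\varphi_\pm|$, and then extract the orthogonal projections $e_\pm$ from the Jordan-type splitting of $|\varphi_+|-|\varphi_-|$. The only difference is packaging: where the paper invokes Kusuda's result (Lemma~\ref{L4}) and the Jordan decomposition of the hermitian functional $\chi=v^*\psi$ via \cite[Theorem~III.4.2]{Takesaki:Book1}, you reprove the needed rigidity lemma from scratch and obtain $e_\pm$ directly as the supports $s(|\varphi_\pm|)$ after observing that $\||\varphi_+|-|\varphi_-|\|=\||\varphi_+|\|+\||\varphi_-|\|$ forces $|\varphi_+|\perp|\varphi_-|$; this makes your argument more self-contained but not conceptually different.
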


We need the following lemma which follows from a result due to
Kusuda \cite{Kusuda:PRIMS95}.

\begin{lemma}\label{L4} Let $\varphi_1, \varphi_2 \in M_\star$ be given. If $\Vert\varphi_1 + \varphi_2\Vert = \Vert\varphi_1\Vert + \Vert\varphi_2\Vert$ holds, then the polar decomposition $\varphi_1 + \varphi_2 = v|\varphi_1+\varphi_2|$ satisfies that $|\varphi_1+\varphi_2| = |\varphi_1| + |\varphi_2|$ and $\varphi_i = v|\varphi_i|$, $i=1,2$.
\end{lemma}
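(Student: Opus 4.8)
The plan is to extract from the hypothesis a single element of the unit ball of $M$ that simultaneously norms $\varphi_1$, $\varphi_2$, and $\varphi := \varphi_1 + \varphi_2$, and then to convert this scalar information into the two asserted functional identities by a Cauchy--Schwarz equality argument inside a GNS space. First I would write the polar decomposition $\varphi = v|\varphi|$, so that $\|v\| \le 1$ and $\varphi(v^*) = |\varphi|(v^* v) = |\varphi|(s(|\varphi|)) = \|\varphi\|$; thus $v^*$ norms $\varphi$. Then I would observe that
\[
\|\varphi_1\| + \|\varphi_2\| = \|\varphi\| = \varphi(v^*) = \varphi_1(v^*) + \varphi_2(v^*) \le |\varphi_1(v^*)| + |\varphi_2(v^*)| \le \|\varphi_1\| + \|\varphi_2\|,
\]
where the last inequality uses $\|v^*\| \le 1$. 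Hence every inequality is an equality and $\varphi_i(v^*) = \|\varphi_i\|$ for $i = 1,2$; that is, the single contraction $v^*$ norms each $\varphi_i$ as well.

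Next, fixing $i$, I would take the polar decomposition $\varphi_i = w_i|\varphi_i|$ with $s_i := s(|\varphi_i|) = w_i^* w_i$ and realize $|\varphi_i|$ in its GNS representation $(\pi, H, \xi)$, so that $|\varphi_i| = \langle \pi(\cdot)\xi, \xi\rangle$, $\pi(s_i)\xi = \xi$, and $\|\xi\|^2 = |\varphi_i|(s_i) = \|\varphi_i\|$. Setting $T := \pi(v^*)$ (a contraction) and $\eta := \pi(w_i)\xi$ (which satisfies $\|\eta\|^2 = |\varphi_i|(w_i^* w_i) = \|\xi\|^2$), the norming identity becomes $\langle T\eta, \xi\rangle = |\varphi_i|(v^* w_i) = \varphi_i(v^*) = \|\xi\|^2 = \|\eta\|\,\|\xi\|$. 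Since $\|T\eta\| \le \|\eta\| = \|\xi\|$, the expansion $\|T\eta - \xi\|^2 = \|T\eta\|^2 - 2\,\mathrm{Re}\langle T\eta, \xi\rangle + \|\xi\|^2 \le 0$ forces $T\eta = \xi$; in particular $\|T\eta\| = \|\eta\|$, whence $(1 - T^* T)\eta = 0$ and $\pi(v)\xi = T^*\xi = T^* T\eta = \eta = \pi(w_i)\xi$. From $\pi(v)\xi = \pi(w_i)\xi$ I would then read off, for every $x \in M$, that $(v|\varphi_i|)(x) = \langle \pi(x)\pi(v)\xi, \xi\rangle = \langle \pi(x)\pi(w_i)\xi, \xi\rangle = (w_i|\varphi_i|)(x) = \varphi_i(x)$, i.e. $\varphi_i = v|\varphi_i|$. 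This step---manufacturing a common partial isometry, i.e. proving that the a priori unrelated $v$ and $w_i$ act identically against $|\varphi_i|$---is the crux of the lemma and is exactly the point at which Kusuda's result is invoked.

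Finally, with $\varphi_i = v|\varphi_i|$ in hand I would add the two identities to get $\varphi = v\sigma$ with $\sigma := |\varphi_1| + |\varphi_2| \ge 0$, and recover $|\varphi| = v^*\varphi = v^* v\,\sigma = s\sigma$, where $s := v^* v$ and $(s\sigma)(x) = \sigma(xs)$. Comparing total masses gives $\sigma(s) = |\varphi|(1) = \|\varphi\| = \|\sigma\| = \sigma(1)$, so $\sigma(1 - s) = 0$; positivity of $\sigma$ together with Cauchy--Schwarz then yields $\sigma(x(1-s)) = 0$ for all $x$, i.e. $s\sigma = \sigma$, and therefore $|\varphi_1 + \varphi_2| = |\varphi| = \sigma = |\varphi_1| + |\varphi_2|$. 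The remaining verifications (the standard recovery $v^*\varphi = |\varphi|$ and the support computations) are routine; the only genuine difficulty is the GNS equality argument of the second paragraph, which one may either carry out by hand as above or quote directly from \cite{Kusuda:PRIMS95}.
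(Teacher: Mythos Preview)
Your argument is correct. The paper's own proof simply quotes Kusuda \cite[Theorem~2.1~(2)]{Kusuda:PRIMS95} for $|\varphi_1+\varphi_2|=|\varphi_1|+|\varphi_2|$, quotes \cite[Theorem~2.1~(4)]{Kusuda:PRIMS95} for $\varphi_i(v^*)=\|\varphi_i\|$, and then cites the proof of \cite[Theorem~1.14.4]{Sakai:Book} to pass from $\varphi_i(v^*)=\|\varphi_i\|$ to $\varphi_i=v|\varphi_i|$. You run the logic in a different order and make it self-contained: you first derive $\varphi_i(v^*)=\|\varphi_i\|$ by the elementary norming inequality, then carry out the Cauchy--Schwarz/GNS equality argument (which is precisely what lies behind Sakai's Theorem~1.14.4) to obtain $\varphi_i=v|\varphi_i|$, and only afterwards deduce $|\varphi_1+\varphi_2|=|\varphi_1|+|\varphi_2|$ from $\varphi=v\sigma$ and a support computation. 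The upshot is that your proof does not need Kusuda's paper at all, at the cost of writing out the GNS step explicitly; the paper's version is shorter but relies on two external citations. Substantively the two proofs hinge on the same mechanism---extracting $\varphi_i=v|\varphi_i|$ from the single norming condition $\varphi_i(v^*)=\|\varphi_i\|$---so the difference is one of packaging rather than of idea.
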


\begin{proof} By \cite[Theorem 2.1 (2)]{Kusuda:PRIMS95} one has $|\varphi_1+\varphi_2| = |\varphi_1| + |\varphi_2|$. By \cite[Theorem 2.1 (4)]{Kusuda:PRIMS95} one has $\varphi_i(v^*) = \Vert\varphi_i\Vert$ for both $i=1,2$. Then the proof of \cite[Theorem 1.14.4]{Sakai:Book} shows that $\varphi_i = v|\varphi_i|$ for both $i=1,2$. \end{proof}

\begin{proof}[Proof of Theorem~\ref{T2}] (i) $\Leftrightarrow$ (ii) is a general fact for arbitrary Banach spaces, as pointed out in the introduction.

(iii) $\Rightarrow$ (ii) follows from (the final part of) the proof of Theorem~\ref{T1}. Thus we do not repeat the same argument here.

(ii) $\Rightarrow$ (iii): Set $\varphi_{\pm} :=
\frac{1}{2}(\varphi\pm\psi) \in M_\star$, and then $\varphi =
\varphi_+ + \varphi_-$ and $\psi = \varphi_+ - \varphi_-$. By assumption, one has $\Vert\varphi\Vert = \Vert\varphi_+ +
\varphi_-\Vert \leq \Vert\varphi_+\Vert + \Vert\varphi_-\Vert = \frac{1}{2}\Vert\varphi+\psi\Vert +
\frac{1}{2}\Vert\varphi-\psi\Vert = \Vert\varphi\Vert$,
and hence $\Vert\varphi_+ + \varphi_-\Vert =
\Vert\varphi_+\Vert + \Vert\varphi_-\Vert$. Let $\varphi =
v|\varphi|$ be the polar decomposition of $\varphi$. Lemma~\ref{L4} shows that $|\varphi| = |\varphi_+| + |\varphi_-|$ and
moreover that $\varphi_\pm = v|\varphi_\pm|$. In particular, we
get $\chi:= v^*\psi = v^*(\varphi_+ - \varphi_-) = |\varphi_+|
- |\varphi_-|$, and thus $\chi^* = \chi$. With the polar
decomposition $\chi = w|\chi|$ one has $|\chi|v^*v = w^*\chi
v^*v = w^*\chi^* v^*v = w^*\psi^* vv^* v = w^*\psi^* v =
|\chi|$, implying $s(|\chi|) \leq v^* v$. Since $vv^*
\varphi_\pm = vv^* v|\varphi_\pm| = \varphi_\pm$, one has $\psi
= \varphi_+ - \varphi_- = vv^*\varphi_+ - vv^*\varphi_- =
vv^*\psi = v\chi$, implying $\chi \neq 0$ due to
$\Vert\psi\Vert=\Vert\varphi\Vert \neq 0$. By \cite[Theorem
III.4.2 (ii)]{Takesaki:Book1} (or the proof of \cite[Theorem 1.14.3]{Sakai:Book}) there are two orthogonal
projections $e_\pm \in M_{|\chi|}$ such that $e_+ + e_- =
s(|\chi|)$, $\pm(e_\pm\chi) \geq 0$, and $\chi = (e_+ -
e_-)|\chi|$ (the polar decomposition of $\chi = v^*\psi$).
Since $s(|\chi|) \leq v^* v$ and $\psi = v\chi = v(e_+ -
e_-)|\chi|$, the uniqueness of polar decompositions shows that
$|\chi| = |\psi|$. By assumption one also has $\Vert \psi\Vert
= \Vert \varphi_+ - \varphi_-\Vert \leq \Vert\varphi_+\Vert + \Vert\varphi_-\Vert = \frac{1}{2}\Vert\varphi
+ \psi\Vert + \frac{1}{2}\Vert\varphi-\psi\Vert =
\Vert\psi\Vert$, and hence $\Vert \varphi_+ - \varphi_-\Vert =
\Vert \varphi_+\Vert + \Vert\varphi_-\Vert$. Lemma~\ref{L4}
again shows that $|\psi| = |\varphi_+ + (-\varphi_-)| =
|\varphi_+| + |-\varphi_-| = |\varphi_+| + |\varphi_-| =
|\varphi|$. Consequently, $\psi = v (e_+ - e_-)|\varphi|$
becomes the polar decomposition of $\psi$. Then $\varphi\pm\psi
= v(2e_\pm|\varphi|)$, and $2|\varphi|(e_\pm) =
\Vert\varphi\pm\psi\Vert = \Vert\varphi\Vert =
|\varphi|(s(|\varphi|))$.
\end{proof}

We may now provide an example showing that Theorem~\ref{T1} cannot be extended to all diffuse von Neumann algebras.

\begin{example}\label{example}
{\slshape Consider $M$ to be the unique hyperfinite type III$_1$ factor. Then, there exists a nonzero $\varphi\in M_\star$ for which the equation $\|\varphi \pm \psi\|=\|\varphi\|=\|\psi\|$ has no solution $\psi\in M_\star$.} Indeed, by \cite[\S3]{HermanTakesaki:CMP70} (see also \cite[p.246--247]{NeshveyevStormer:EntropyBook}), there is a faithful normal state $\varphi$ in $M_\star$ with trivial centralizer, and thus Theorem~\ref{T2} tells us that the equation indeed has no solution $\psi$ for $\varphi$.
\end{example}

The remaining question is apparently whether the norm equation entitling this section is always solvable or not in the predual of a given type III$_1$ factor. It seems a very non-trivial question and we have known that it is certainly negative for several type III$_1$ factors including, as presented above, the unique hyperfinite type III$_1$ factor.

Next, we do give an approximate variant of Theorem~\ref{T1} that in turn holds even for arbitrary diffuse von Neumann algebras. The key is a deep result due to Haagerup and St\o rmer \cite{HaagerupStormer:Adv90}.

\begin{proposition}\label{P2} Let $M$ be a diffuse von Neumann algebra and $M_\star$ be its unique predual. Then for every $\varphi\in M_\star$ and every $\varepsilon>0$, there is $\psi_\varepsilon \in M_\star$ so that $\Vert\varphi\Vert - \varepsilon < \Vert \varphi\pm\psi_\varepsilon\Vert < \Vert\varphi\Vert + \varepsilon$ and $\Vert\psi_\varepsilon\Vert = \Vert\varphi\Vert$.
\end{proposition}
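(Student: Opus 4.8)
The plan is to follow the proof of Theorem~\ref{T1}, but with projections that lie only \emph{almost} in a centralizer. We may assume $\varphi\neq0$ and, after scaling, $\Vert\varphi\Vert=1$; write $\varphi=v|\varphi|$ for the polar decomposition and set $N:=s(|\varphi|)Ms(|\varphi|)$, which is again diffuse (a minimal projection of $N$ would be minimal in $M$), so $\omega:=|\varphi|\!\upharpoonright_N$ is a faithful normal state of $N$. The claim is that it suffices to produce, for each $\varepsilon>0$, an orthogonal pair $e_\pm$ of projections of $N$ with $e_++e_-=s(|\varphi|)$, $|\omega(e_+)-\tfrac12|<\varepsilon$ and $\Vert e_+\omega e_-\Vert<\varepsilon$ (where $e_+\omega e_-\in N_\star$). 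Indeed, $u:=e_+-e_-$ is then a self-adjoint unitary of $N$, so, exactly as in the proof of Theorem~\ref{T1}, $\psi_\varepsilon:=vu|\varphi|$ is a polar decomposition, whence $\Vert\psi_\varepsilon\Vert=\Vert\varphi\Vert=1$; moreover $\varphi\pm\psi_\varepsilon=v(s(|\varphi|)\pm u)|\varphi|=2\,v\,e_\pm|\varphi|$, so $\Vert\varphi\pm\psi_\varepsilon\Vert=2\Vert e_\pm\omega\Vert$ ($v$ being norm-preserving on functionals supported by its initial projection $s(|\varphi|)$, and the restriction to $N$ being harmless since $|\varphi|$ is supported by $s(|\varphi|)$). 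As $\Vert e_+\omega\Vert\geq\omega(e_+)$ and $\Vert e_+\omega\Vert\leq\Vert e_+\omega e_+\Vert+\Vert e_+\omega e_-\Vert=\omega(e_+)+\Vert e_+\omega e_-\Vert$ (the functional $e_+\omega e_+$ being positive of norm $\omega(e_+)$), and symmetrically with $+$ and $-$ swapped, the conditions on $e_\pm$ give $1-2\varepsilon<\Vert\varphi\pm\psi_\varepsilon\Vert<1+2\varepsilon$; a rescaling of $\varepsilon$ finishes.

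It remains to construct such $e_\pm$, and here I would first reduce to type III$_1$ factors. Decompose $N=Nz_0\oplus\sum_{i\in I}^\oplus Nz_i$ by Lemma~\ref{L1}: $\mathcal{Z}(Nz_0)$ is diffuse and each $Nz_i$, $i\in I$, is a factor, necessarily not of type I since $N$ is diffuse. The restriction $\omega_i$ of $\omega$ to the direct summand $Nz_i$ is a faithful normal state of $Nz_i$ (in particular nonzero, as $z_i\neq0$), with $s(\omega_i)=z_i$, and $\sum_i\Vert\omega_i\Vert=1$; fix $\varepsilon_i>0$ with $\sum_i\varepsilon_i<\varepsilon$. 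Under the $\ell^1$-identification $N_\star=\sum_i^\oplus(Nz_i)_\star$ one has, for $e_\pm:=\sum_i e_\pm^{(i)}$, that $\omega(e_+)=\sum_i\omega_i(e_+^{(i)})$ and $e_+\omega e_-=\bigoplus_i e_+^{(i)}\omega_i e_-^{(i)}$, so it is enough to find, for each $i$, orthogonal projections $e_\pm^{(i)}$ of $Nz_i$ with $e_+^{(i)}+e_-^{(i)}=z_i$, $|\omega_i(e_+^{(i)})-\tfrac12\Vert\omega_i\Vert|<\varepsilon_i$ and $\Vert e_+^{(i)}\omega_i e_-^{(i)}\Vert<\varepsilon_i$. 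When $i=0$, or when $Nz_i$ is a factor of type II or of type III$_\lambda$ with $\lambda\neq1$, Lemma~\ref{L3} says the centralizer $(Nz_i)_{\omega_i}$ is diffuse, and the construction in the proof of Theorem~\ref{T1} provides such a pair $e_\pm^{(i)}$ \emph{inside} that centralizer with $\omega_i(e_\pm^{(i)})=\tfrac12\Vert\omega_i\Vert$; as complementary projections $e_\pm^{(i)}$ of a centralizer satisfy $\omega_i(e_-^{(i)}xe_+^{(i)})=\omega_i(xe_+^{(i)}e_-^{(i)})=0$, we even get $e_+^{(i)}\omega_i e_-^{(i)}=0$ here. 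The only remaining possibility is that $Nz_i$ is a type III$_1$ factor.

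This last case is the heart of the matter, and is where the Haagerup--St\o rmer theorem \cite{HaagerupStormer:Adv90} is used. Write $P:=Nz_i$, a type III$_1$ factor, with faithful normal state $\rho:=\omega_i$. Since $P$ is of type III it is properly infinite, so it carries a $2\times2$ system of matrix units with diagonal $f_+,f_-$ (complementary projections, $f_++f_-=1$); transporting a faithful normal state of $f_+Pf_+\cong P$ one builds in the standard way a faithful normal positive $\tau\in P_\star$ with $\Vert\tau\Vert=\Vert\omega_i\Vert$ for which $f_\pm$ lie in the centralizer $P_\tau$ and $\tau(f_\pm)=\tfrac12\Vert\omega_i\Vert$. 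Now, the flow of weights of a type III$_1$ factor being trivial, the Haagerup--St\o rmer theorem --- which extends the Connes--St\o rmer homogeneity of the state space to general von Neumann algebras via the flow of weights --- guarantees that $\rho$ lies in the norm closure of the unitary orbit $\{w\tau w^*:w\in\mathcal{U}(P)\}$; so pick $w\in\mathcal{U}(P)$ with $\Vert w\tau w^*-\rho\Vert<\varepsilon_i$ and put $e_\pm^{(i)}:=wf_\pm w^*$. These are complementary projections of $Nz_i$, and since $f_\pm\in P_\tau$ we have $\tau(f_+)=\tfrac12\Vert\omega_i\Vert$ and $\tau(f_-yf_+)=\tau(yf_+f_-)=0$ for all $y\in P$; combined with $\Vert w\tau w^*-\rho\Vert<\varepsilon_i$ this yields at once $|\omega_i(e_+^{(i)})-\tfrac12\Vert\omega_i\Vert|=|\rho(wf_+w^*)-(w\tau w^*)(wf_+w^*)|<\varepsilon_i$ and, for $\Vert x\Vert\leq1$, $|\omega_i(e_-^{(i)}xe_+^{(i)})|=|\rho(wf_-w^*xwf_+w^*)-(w\tau w^*)(wf_-w^*xwf_+w^*)|<\varepsilon_i$, i.e.\ $\Vert e_+^{(i)}\omega_i e_-^{(i)}\Vert\leq\varepsilon_i$. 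Setting $e_\pm:=\sum_i e_\pm^{(i)}$ then produces the pair required in the first paragraph, and the proof is complete. The only genuine obstacle is this appeal to Haagerup--St\o rmer in the type III$_1$ case --- recognizing that approximate unitary equivalence of all faithful normal states there lets one replace $\rho$ by the visibly symmetric state $\tau$; the polar-decomposition identities, the central decomposition with summable errors, and the elementary comparison with $\tau$ are routine.
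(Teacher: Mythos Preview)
Your proof is correct, and it rests on the same deep ingredient as the paper's own argument --- the Haagerup--St\o rmer theorem in the type III$_1$ factor case --- but you deploy it in a genuinely different way.

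The paper first reduces to the case where $M$ itself is a type III$_1$ factor (the other summands being handled exactly by Theorem~\ref{T1}), and then invokes \cite[Theorem~11.1]{HaagerupStormer:Adv90} to approximate the given $|\varphi|$ by a faithful positive $\chi_\varepsilon\in M_\star$ whose centralizer is of type~II (hence diffuse). One then sets $\varphi_\varepsilon:=v\chi_\varepsilon$, solves the equation \emph{exactly} for $\varphi_\varepsilon$ via the argument of Theorem~\ref{T1}, and reads off the approximate conclusion for $\varphi$ from $\Vert\varphi-\varphi_\varepsilon\Vert<\varepsilon$. In short: perturb the \emph{state} to one with diffuse centralizer, then find exact halving projections there.

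You instead fix a target pair of estimates ($|\omega(e_+)-\tfrac12|$ and $\Vert e_+\omega e_-\Vert$ small), reduce along the central decomposition of $N=s(|\varphi|)Ms(|\varphi|)$, and on each type III$_1$ summand $P$ you build by hand a faithful $\tau$ carrying a halving pair $f_\pm\in P_\tau$, then use the Connes--St\o rmer/Haagerup--St\o rmer \emph{homogeneity of the state space} of a type III$_1$ factor to unitarily conjugate $\tau$ close to the given $\rho$, transporting $f_\pm$ to the desired approximate halving pair $e_\pm$. In short: build the projections for a convenient state, then conjugate the \emph{projections} close.

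What each approach buys: the paper's route is shorter once one is willing to quote the specific ``type~II centralizer'' statement (Theorem~11.1), and it cleanly separates the approximation from the exact construction. Your route is slightly more self-contained in that it only needs the more primitive homogeneity statement (approximate unitary equivalence of faithful normal states in type III$_1$), not the refinement that states with diffuse centralizer are dense; on the other hand you carry a small bookkeeping overhead (the $\Vert e_+\omega e_-\Vert$ estimate and the $\ell^1$-summation of errors $\varepsilon_i$, the index set being countable since $\omega$ is faithful with $\sum_i\Vert\omega_i\Vert=1$). Your constants in the final inequality are off by a harmless factor (the upper bound is $1+4\varepsilon$ rather than $1+2\varepsilon$), which your ``rescaling of $\varepsilon$'' indeed absorbs.
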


\begin{proof} By the proof of Theorem~\ref{T1}, it suffices to assume that $M$ is a type III$_1$ factor, since the desired assertion holds without error $\varepsilon>0$ in the other cases and since only countably many $\varphi_i$'s can be nonzero. We may and do also assume that $\varphi$ is nonzero.

Let $\varphi = v|\varphi|$ be the polar decomposition. Applying
\cite[Theorem 11.1]{HaagerupStormer:Adv90} to $c\,|\varphi|\!\upharpoonright_{s(|\varphi|)Ms(|\varphi|)}$ with $c := 1/\Vert\,|\varphi|\,\Vert = 1/\Vert\varphi\Vert$,  one can find a state $\eta_\varepsilon \in M_\star$ and $u_\varepsilon \in M$ in such a way that
$$
s(\eta_\varepsilon)=s(|\varphi|) = u_\varepsilon^* u_\varepsilon = u_\varepsilon u_\varepsilon^*,
\qquad
\big\Vert u_\varepsilon^* \eta_\varepsilon u_\varepsilon - c\,|\varphi| \big\Vert
=
\big\Vert \eta_\varepsilon - u_\varepsilon(c\,|\varphi|)u_\varepsilon^*\big\Vert < c\,\varepsilon,
$$
and $M_{\eta_\varepsilon}$ is of type II. Note here that any reduced algebra of a type III$_1$ factor by a nonzero projection becomes again of type III$_1$ thanks to
\cite[Corollary 3.2.8]{Connes:ASENS73}. Define a positive $\chi_\varepsilon := c^{-1}\,u_\varepsilon^* \eta_\varepsilon u_\varepsilon \in M_\star$. Then $s(\chi_\varepsilon) = s(\eta_\varepsilon) = s(|\varphi|)$ and $\Vert\chi_\varepsilon - |\varphi|\Vert < \varepsilon$. Moreover, by \cite[Corollary VIII.1.4]{Takesaki:Book2} one has $M_{\chi_\varepsilon} = u_\varepsilon M_{\eta_\varepsilon}u_\varepsilon^*$, being of type II, i.e., diffuse. Set
$\varphi_\varepsilon := v\chi_\varepsilon$, being a polar
decomposition. By the proof of Theorem~\ref{T1}, each
$\varphi_\varepsilon$ has $\psi_\varepsilon \in M_\star$ so
that $\Vert\varphi_\varepsilon \pm \psi_\varepsilon\Vert =
\Vert\varphi_\varepsilon\Vert = \Vert \psi_\varepsilon\Vert$.
By the construction of $\varphi_\varepsilon$, we observe that
$\Vert\varphi_\varepsilon\Vert = \Vert\chi_\varepsilon\Vert =
\Vert\varphi\Vert$ and $|\Vert \varphi\pm\psi_\varepsilon\Vert
- \Vert \varphi_\varepsilon\pm\psi_\varepsilon\Vert| \leq
\Vert\varphi-\varphi_\varepsilon\Vert = \Vert v(|\varphi| -
\chi_\varepsilon)\Vert \leq \Vert |\varphi| - \chi_\varepsilon
\Vert < \varepsilon$. Hence the assertion follows.
\end{proof}

Here are some remarks on centralizers of normal positive linear functionals.

\begin{remarks}\label{R2} Let $M$ be a von Neumann algebra.
{\slshape \begin{itemize}
\item[(1)] The following are equivalent{\rm:}
\begin{itemize}
\item[(a)] there exists a nonzero positive $\varphi \in M_\star$ with $M_\varphi = \mathbb{C}s(\varphi)$,
\item[(b)] there exists a nonzero positive $\varphi \in M_\star$ such that $M_\varphi$ is not diffuse.
\end{itemize}
\item[(2)] If $M_\varphi$ is diffuse for every nonzero positive $\varphi \in M_\star$, then $M$ itself is diffuse {\rm(}note that the reverse implication is false{\rm)}.
\end{itemize} }
\end{remarks}

\begin{proof}
(1): (a) $\Rightarrow$ (b) is trivial. (b) $\Rightarrow$ (a) is shown as follows. Assume that $\varphi$ is a nonzero normal positive linear functional on $M$ such that $M_\varphi$ has a minimal projection, say $e$. Then the nonzero normal positive linear functional $e\varphi e$ has the trivial centralizer $M_{e\varphi e} = eM_\varphi e = \mathbb{C}e$.

(2): If $M$ is not diffuse, then there is a minimal projection $e$ in $M$ so that $Mz$ with $z = c_M(e)$ becomes a type I factor by \cite[Proposition 6.4.3]{KadisonRingrose:Book2}, a contradiction.
\end{proof}

The next proposition sumarizes when the equation entitling the section is solvable for every $\varphi\in M_\star$. It includes a characterization in terms of paths in $M_\star$ which we will interpret in terms of the so-called flat spaces and girth curves.

\begin{proposition}\label{P3} Let $M$ be a von Neumann algebra, and $M_\star$ denotes its unique predual.
\begin{itemize}
\item[(1)] The following are equivalent{\rm:}
\begin{itemize}
\item[(i)] The norm equation $\Vert\varphi\pm\psi\Vert = \Vert\varphi\Vert = \Vert\psi\Vert$ has a solution $\psi \in M_\star$ for every $\varphi \in  M_\star$.
\item[(ii)] $M_\chi$ is diffuse for every non-zero, positive $\chi \in M_\star$.
\item[(iii)] For every $\varphi \in M_\star$ there is a path $\{\varphi_t\}_{0 \leq t \leq 2}$ in $M_\star$ such that $\varphi_0 = \varphi$, $\varphi_2 = -\varphi$, $\Vert\varphi_t\Vert = \Vert\varphi\Vert$ {\rm(}$0 \leq t \leq 2${\rm)}, and $\Vert\varphi_s - \varphi_t\Vert = \Vert\varphi\Vert\,|s-t|$ {\rm(}$0 \leq s, t \leq 2${\rm)}.
\end{itemize}
\item[(2)] If $M$ is diffuse, then the subset of all those $\varphi \in M_\star$ possessing a path $\{\varphi_t\}_{0 \leq t \leq 2}$ in $M_\star$ such that $\varphi_0 = \varphi$, $\varphi_2 = -\varphi$, $\Vert\varphi_t\Vert = \Vert\varphi\Vert$ {\rm(}$0 \leq t \leq 2${\rm)}, and $\Vert\varphi_s - \varphi_t\Vert = \Vert\varphi\Vert\,|s-t|$ {\rm(}$0 \leq s, t \leq 2${\rm)} is norm-dense in $M_\star$.
\end{itemize}
\end{proposition}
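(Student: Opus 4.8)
The plan is to establish part~(1) through the implications (ii) $\Rightarrow$ (i) $\Rightarrow$ (ii) and (ii) $\Rightarrow$ (iii) $\Rightarrow$ (i), and then to obtain part~(2) from Proposition~\ref{P2} by a central-decomposition-and-concatenation argument. For (ii) $\Rightarrow$ (i): given a nonzero $\varphi$ with polar decomposition $\varphi = v|\varphi|$ (the case $\varphi=0$ being trivial), hypothesis~(ii) makes $M_{|\varphi|}$ diffuse, and then the last part of the proof of Theorem~\ref{T1} produces orthogonal projections $e_\pm \in M_{|\varphi|}$ with $e_+ + e_- = s(|\varphi|)$ and $|\varphi|(e_\pm) = \|\varphi\|/2$, so by condition~(iii) of Theorem~\ref{T2} the element $\psi := v(e_+ - e_-)|\varphi|$ solves the norm equation. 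For (i) $\Rightarrow$ (ii) I would argue contrapositively: if some nonzero positive $\chi$ has non-diffuse $M_\chi$, then Remarks~\ref{R2}(1) provides a nonzero positive $\chi'$ with $M_{\chi'} = \mathbb{C}s(\chi')$; as $\mathbb{C}s(\chi')$ contains no pair of nonzero orthogonal projections adding up to $s(\chi')$, condition~(iii) of Theorem~\ref{T2} fails for $\varphi := \chi'$, and hence by Theorem~\ref{T2} the norm equation has no solution for $\chi'$, so~(i) fails.

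The genuinely new step is (ii) $\Rightarrow$ (iii), which upgrades the single ``midpoint'' of Theorem~\ref{T1} to a continuum of cuts. Fix nonzero $\varphi = v|\varphi|$; by~(ii) the algebra $M_{|\varphi|}$ is diffuse, so exactly as in the proof of Theorem~\ref{T1} I would pick a unital abelian von Neumann subalgebra $C \subseteq M_{|\varphi|}$ with $(C,\|\varphi\|^{-1}|\varphi|\!\upharpoonright_C) \cong (L_\infty[0,1],\mathrm{Leb})$. Inside $L_\infty[0,1]$ take, for $0 \leq t \leq 2$, the $\{\pm1\}$-valued functions $g_t := \mathbf{1}_{[t/2,1]} - \mathbf{1}_{[0,t/2)}$, transport them to self-adjoint unitaries $u_t \in C$, and put $\varphi_t := v u_t |\varphi|$. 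Since $(vu_t)^*(vu_t) = u_t^2 = s(|\varphi|)$ this is a polar decomposition, whence $\|\varphi_t\| = \|\varphi\|$, while $u_0 = s(|\varphi|)$ and $u_2 = -s(|\varphi|)$ give $\varphi_0 = \varphi$ and $\varphi_2 = -\varphi$. For $s \leq t$ one computes $g_s - g_t = 2\,\mathbf{1}_{[s/2,t/2)}$, so $u_s - u_t = 2 p_{s,t}$ with $p_{s,t} \in C \subseteq M_{|\varphi|}$ a projection of trace $(t-s)/2$; then $p_{s,t}|\varphi|$ is a positive functional with support $p_{s,t}$ and norm $|\varphi|(p_{s,t}) = \|\varphi\|(t-s)/2$, and $\varphi_s - \varphi_t = (v p_{s,t})(2 p_{s,t}|\varphi|)$ is a polar decomposition, so $\|\varphi_s - \varphi_t\| = 2|\varphi|(p_{s,t}) = \|\varphi\|\,|s-t|$. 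This furnishes the path. Finally (iii) $\Rightarrow$ (i) is immediate by taking $\psi := \varphi_1$: using $\varphi_0 = \varphi$ and $\varphi_2 = -\varphi$, the identities $\|\varphi_0 - \varphi_1\| = \|\varphi_1 - \varphi_2\| = \|\varphi\|$ become $\|\varphi - \psi\| = \|\varphi + \psi\| = \|\varphi\| = \|\psi\|$.

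For part~(2) I would decompose $M = Mz_0 \oplus \sum_{i\in I}^\oplus Mz_i$ as in Lemma~\ref{L1} and $\varphi = \varphi_0 + \sum_{i} \varphi_i$ correspondingly, with only countably many $\varphi_i$ nonzero. For $\varphi_0$ and for every nonzero $\varphi_i$ whose $Mz_i$ is a factor of neither type~I nor type~III$_1$, Lemma~\ref{L3} gives condition~(ii) for that summand, so part~(1) already supplies a path. For a nonzero $\varphi_i$ living in a type~III$_1$ factor $Mz_i$, the construction in the proof of Proposition~\ref{P2} yields, for any $\delta_i > 0$, an element $\varphi_i'$ with $\|\varphi_i - \varphi_i'\| < \delta_i$ and with $|\varphi_i'|$ having a type~II (hence diffuse) centralizer, so (ii) $\Rightarrow$ (iii) gives a path $\{\varphi_{i,t}\}$ for $\varphi_i'$. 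Choosing $\sum_i \delta_i < \varepsilon$ and replacing those $\varphi_i$ by $\varphi_i'$, one gets $\varphi'$ with $\|\varphi - \varphi'\| < \varepsilon$; concatenating the component paths coordinatewise, $\varphi_t' := \varphi_{0,t} + \sum_i \varphi_{i,t}$, produces a path for $\varphi'$ in $M_\star$, because in the $\ell^1$-direct sum $\|\varphi_t'\| = \|\varphi_{0,t}\| + \sum_i \|\varphi_{i,t}\| = \|\varphi'\|$ and $\|\varphi_s' - \varphi_t'\| = \|\varphi_{0,s} - \varphi_{0,t}\| + \sum_i \|\varphi_{i,s} - \varphi_{i,t}\| = \|\varphi'\|\,|s-t|$.

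I expect the main obstacle to be the exact metric identity $\|\varphi_s - \varphi_t\| = \|\varphi\|\,|s-t|$ demanded by~(iii): it forces the path to be realized by norm-one ``sign multipliers'' that pairwise commute inside one abelian subalgebra, so that every difference $\varphi_s - \varphi_t$ has a positive part computable via uniqueness of polar decomposition --- which is precisely why the $(L_\infty[0,1],\mathrm{Leb})$-subalgebra from the proof of Theorem~\ref{T1} is the right device. Apart from this, the argument is organizational, resting on Theorems~\ref{T1} and~\ref{T2}, Lemma~\ref{L3}, Remarks~\ref{R2}, and Proposition~\ref{P2}.
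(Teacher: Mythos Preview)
Your proposal is correct and follows essentially the same approach as the paper. The cycle of implications for part~(1), the $(L_\infty[0,1],\mathrm{Leb})$-subalgebra construction of the path in (ii) $\Rightarrow$ (iii), and the use of Proposition~\ref{P2}'s approximation for part~(2) all match the paper's proof; your treatment of part~(2) merely makes explicit the central decomposition and coordinatewise concatenation that the paper compresses into the phrase ``the same argument as in the proof of Proposition~\ref{P2}'', and your separate (ii) $\Rightarrow$ (i) step is redundant (since (ii) $\Rightarrow$ (iii) $\Rightarrow$ (i) already covers it) but harmless.
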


\begin{proof} (1): (iii) $\Rightarrow$ (i) is trivial; just $\psi := \varphi_1$.

(i) $\Rightarrow$ (ii): By Theorem~\ref{T2} the solvability of the equation for every $\varphi \in M_\star$ implies that $M_\chi \neq \mathbb{C}s(\chi)$ for every non-zero, positive $\chi \in M_\star$. By Remarks~\ref{R2}.(1) the latter is equivalent to  that $M_\chi$ is diffuse for every non-zero, positive $\chi \in M_\star$.

(ii) $\Rightarrow$ (iii): We may and do assume that a given $\varphi \in M_\star$ is non-zero. Let $\varphi = v|\varphi|$ be its polar decomposition. By assumption $M_{|\varphi|}$ must be diffuse. As in the proof of Theorem~\ref{T2}, one can choose a commutative von Neumann subalgebra $C$ of $M_{|\varphi|}$ with unit $s(|\varphi|)$ in such a way that $(C,|\varphi|(s(|\varphi|))^{-1}\,|\varphi|\!\upharpoonright_{C}) \cong (L_\infty[0,1],\mathrm{Leb})$. Let $e_t$, $1 \leq t \leq 2$, be the projection of $C$ that corresponds to the characteristic function $\chi_{[0,t/2]} \in L_\infty[0,1]$. Clearly
$$
|\varphi|(e_t - e_s) = (|\varphi|(s(|\varphi|))/2)(t-s) = (\Vert\varphi\Vert/2)(t-s) \qquad (0 \leq s \leq t \leq 2).
$$
Set $\varphi_t := v(s(|\varphi|)-2e_t)|\varphi|$, which becomes a unique polar decomposition, since $s(|\varphi|)-2e_t$ is a self-adjoint unitary in $M_{|\varphi|}$. Then one has $\varphi_0 = \varphi$, $\varphi_2 = - \varphi$, $\Vert\varphi_t\Vert = \Vert |\varphi|\Vert = \Vert\varphi\Vert$ and $\Vert\varphi_t - \varphi_s\Vert = \Vert 2 v(e_t - e_s)|\varphi|\Vert = 2|\varphi|(e_t - e_s) = \Vert\varphi\Vert\,(t-s)$ ($0 \leq s \leq t \leq 2$).

(2): The same argument as in the proof of Proposition~\ref{P2} shows that for every $\psi \in M_\star$ and every $\varepsilon > 0$ there is $\varphi_\varepsilon \in M_\star$ with $\|\varphi_\varepsilon\|=\|\varphi\|$ such that $M_{|\varphi_\varepsilon|}$ is diffuse and $\Vert\psi-\varphi_\varepsilon\Vert \leq \Vert |\psi| - |\varphi_\varepsilon|\Vert < \varepsilon$. The proof of (1) above shows that $\varphi_\varepsilon$ has the desired path. Hence the subset is norm-dense.
\end{proof}

Let us give an interpretation of the above in terms of flat spaces and girth curves. We refer the reader to the seminal papers \cite{Schaffer-paper,Harrell-Karlovitz} and the books \cite{Schaffer-book,vanDulst-book} for more information and background (though they are dealing with only real spaces). Given a (real or complex) Banach space $X$, we write $S_X$ to denote its unit sphere. The space $X$ is called \emph{flat} if there exists $x \in S_X$ and a (simple) curve lying on $S_X$ connecting $\pm x$ with length $2$. Equivalently, there is a path $t\longmapsto x_t$ from $[0,2]$ to $S_X$ such that $x_0=x$, $x_2=-x$ and $\Vert x_s - x_t\Vert = |s-t|$ for every $0 \leq s, t \leq 2$ (see \cite[p.196--197]{vanDulst-book}). Such a curve/path is called a \emph{girth curve}. The existence of girth curves is unusual and it is a purely infinite-dimensional phenomenon. See \cite[Eq.(17.6)]{vanDulst-book} (valid for complex spaces), which shows that the dual of any flat space must be non-separable.

It seems interesting to study whether or not for every $\varphi \in S_{M_\star}$ there is a girth curve connecting $\pm\varphi$ as long as $M$ is diffuse. The answer in the commutative case (i.e.~ $M_\star = L_1(\mu)$) was known to be true (see \cite[Example~1]{Harrell-Karlovitz} and \cite[\S1]{Schaffer-Israel-L-spaces}). The above proposition (together with Theorem~\ref{T1} and Example~\ref{example}) shows that this also is the case when $M$ is assumed to be diffuse as well as to have no type III$_1$ factor as a direct summand, and it is not the case for the unique hyperfinite type III$_1$ factor. Thus the problem is again about general type III$_1$ factors. For general diffuse von Neumann algebras we only may say that the set of points of the unit sphere which are the starting point of a girth curve is dense.
We compile all these results in the following corollary.

{\begin{corollary}\label{cor:Girth_curves}
Let $M$ be a diffuse von Neumann algebra, and $M_\star$ denotes its unique predual.
\begin{itemize}
\item[(1)] The set of elements in $S_{M_\star}$ which are the starting point of a girth curve is dense in $S_{M_\star}$. 
\item[(2)] Moreover, if $M$ does not contain any type III$_1$ factor as a direct summand, then the set in {\rm(1)} is actually the whole $S_{M_\star}$.
\item[(3)] On the other hand, the set in {\rm(1)} is not the whole $S_{M_\star}$ at least when $M$ is the unique hyperfinite type III$_1$ factor.
\end{itemize}
\end{corollary}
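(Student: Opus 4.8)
The three items of the corollary are, in substance, a repackaging of results already established, so the plan is simply to assemble them; the one point deserving care is upgrading norm-density in $M_\star$ to norm-density in $S_{M_\star}$. First I would fix the dictionary between the two languages: for $\varphi\in S_{M_\star}$, a path $\{\varphi_t\}_{0\le t\le2}$ in $M_\star$ with $\varphi_0=\varphi$, $\varphi_2=-\varphi$, $\|\varphi_t\|=\|\varphi\|$ and $\|\varphi_s-\varphi_t\|=\|\varphi\|\,|s-t|$ is exactly a girth curve starting at $\varphi$ (and conversely, every girth curve starting at a point of $S_{M_\star}$ is such a path). Moreover the set of $\varphi\in M_\star$ admitting such a path is invariant under multiplication by positive scalars --- just rescale the path --- so once it is known to be norm-dense in $M_\star$, normalising a nearby point of it shows that its intersection with $S_{M_\star}$ is norm-dense in $S_{M_\star}$.

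Part~(1) then follows by feeding diffuseness of $M$ into Proposition~\ref{P3}(2): the set above is norm-dense in $M_\star$, hence by the dictionary the starting points of girth curves form a norm-dense subset of $S_{M_\star}$. (Alternatively, the approximation built in the proof of Proposition~\ref{P3}(2) keeps the norm unchanged, so for $\varphi\in S_{M_\star}$ the approximants already lie on $S_{M_\star}$.) For part~(2), I would run $M$ through Lemma~\ref{L1}: writing $M=Mz_0\oplus\sum_{i\in I}^\oplus Mz_i$, diffuseness of $M$ forbids any $Mz_i$ from being of type~I and the standing hypothesis forbids type~III$_1$, so every $Mz_i$ ($i\in I$) is of neither type, and Theorem~\ref{T1} makes the norm equation solvable for every $\varphi\in M_\star$. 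The implication (i)$\Rightarrow$(iii) of Proposition~\ref{P3}(1) then supplies the path for every $\varphi$, in particular for every $\varphi\in S_{M_\star}$, where it is a girth curve; hence the set in~(1) is all of $S_{M_\star}$.

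For part~(3), I would take the faithful normal state $\varphi\in M_\star$ with trivial centralizer provided by Example~\ref{example}; being a state, $\varphi\in S_{M_\star}$. Were $\varphi$ the starting point of a girth curve $\{x_t\}_{0\le t\le2}$, then $\psi:=x_1$ would have $\|\psi\|=1=\|\varphi\|$ and, using $x_0=\varphi$, $x_2=-\varphi$ and $\|x_0-x_1\|=\|x_1-x_2\|=1$, also $\|\varphi\pm\psi\|=1$; this contradicts Example~\ref{example} (it is just the trivial implication (iii)$\Rightarrow$(i) of Proposition~\ref{P3}(1) applied to the midpoint $\varphi_1$). Hence $\varphi$ is not the starting point of any girth curve, so the set in~(1) is a proper subset of $S_{M_\star}$ in this case. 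I do not expect a genuine obstacle anywhere; the only mildly delicate moment is the density upgrade in part~(1), handled by the scaling remark or by the norm-preserving nature of the approximation in Proposition~\ref{P3}(2).
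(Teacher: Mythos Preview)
Your proposal is correct and follows essentially the same route as the paper, which presents the corollary as a direct compilation of Proposition~\ref{P3}, Theorem~\ref{T1}, and Example~\ref{example} without a separate proof. Your extra care in upgrading density in $M_\star$ to density in $S_{M_\star}$ is appropriate, and both of your suggested fixes (rescaling, or observing that the approximant in the proof of Proposition~\ref{P3}(2) already has the same norm) work.
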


 \section{Applications to ultraproducts and the (uniform) Daugavet property} \label{sec:Daugavet}

Let us recall the notion of (Banach spaces) ultraproducts (see e.g.~\cite{He}). Let $\mathcal{U}$ be a free ultrafilter on $\N$, and
let $\{ X_i \}_{i\in \N}$ be a sequence of Banach spaces. We
can consider the $\ell_\infty$-sum of the family $[\oplus_{i\in \N} X_i]_{\ell_\infty}$ together with the closed subspace $N_{\mathcal U}$ of all $\{x_i\}_{i\in \N} \in [\oplus_{i\in
\N} X_i]_{\ell_\infty}$ with $\lim_{\mathcal U} \Vert x_i
\Vert=0$. The quotient space $\left[\oplus_{i\in \N}
X_i\right]_{\ell_\infty}/ N_{\mathcal U}$ is called the
\emph{ultraproduct} of the family $\{ X_i \}_{i\in \N}$
(relative to ${\mathcal U}$), and is denoted by
$(X_i)/{\mathcal U}$. Let $(x_i)$ stand for the element of
$(X_i)/{\mathcal U}$ containing a given family $\{ x_i \} \in
\left[\oplus_{i\in \N} X_i\right]_{\ell_\infty}$. It is easy to
check that $\Vert (x_i) \Vert = \lim_{\mathcal U} \Vert x_i
\Vert$. If all the $X_i$ are equal to the same Banach space
$X$, the ultraproduct of the family is called the
\emph{ultrapower} of $X$ (relative to ${\mathcal U}$) and usually denoted by $X/\mathcal{U}$.

The results obtained in the previous section have the next two  applications to ultraproducts.

\begin{corollary}\label{C1}
Let $M_i$, $i \in \mathbb{N}$, be a sequence of diffuse von Neumann
algebras, $[M_i]_\star$, $i \in \mathbb{N}$, be their unique
preduals, and $\mathcal{U}$ be a free ultrafilter on
$\mathbb{N}$. Then the ultraproduct
$([M_i]_\star)/\mathcal{U}$ has the property that for every $f
\in ([M_i]_\star)/\mathcal{U}$ there exists $g \in
([M_i]_\star)/\mathcal{U}$ such that $\Vert f\pm g\Vert = \Vert
f\Vert = \Vert g\Vert$. In particular, every
nonzero normal positive linear functional on the dual von Neumann algebra $\mathcal{M}$ of the ultraproduct $([M_i]_\star)/\mathcal{U}$ has diffuse centralizer and, in particular, $\mathcal{M}$ itself is diffuse.
\end{corollary}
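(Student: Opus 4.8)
The plan is to derive the norm equation in the ultraproduct directly from the approximate statement of Proposition~\ref{P2}, and then to read off the structural consequences for $\mathcal M$ from Proposition~\ref{P3} and Remarks~\ref{R2}.

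\emph{Step 1: solving the equation in $([M_i]_\star)/\mathcal U$.} Let $f\in([M_i]_\star)/\mathcal U$ be given; if $f=0$ put $g=0$, so assume $f\neq0$. Represent $f=(\varphi_i)$ with $\sup_i\Vert\varphi_i\Vert<\infty$, and note $\lim_{\mathcal U}\Vert\varphi_i\Vert=\Vert f\Vert>0$. Replacing each $\varphi_i$ with $\varphi_i\neq0$ by $\Vert f\Vert\,\Vert\varphi_i\Vert^{-1}\varphi_i$ and redefining $\varphi_i$ arbitrarily of norm $\Vert f\Vert$ on the $\mathcal U$-small set $\{i:\varphi_i=0\}$, we perturb the representing family by a $\mathcal U$-null one (since $\lim_{\mathcal U}\big|\Vert f\Vert-\Vert\varphi_i\Vert\big|=0$), and hence may assume $\Vert\varphi_i\Vert=\Vert f\Vert$ for every $i$. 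Since each $M_i$ is diffuse, Proposition~\ref{P2} applied to $\varphi_i$ with $\varepsilon=1/i$ provides $\psi_i\in[M_i]_\star$ with $\Vert\psi_i\Vert=\Vert\varphi_i\Vert=\Vert f\Vert$ and $\Vert\varphi_i\Vert-1/i<\Vert\varphi_i\pm\psi_i\Vert<\Vert\varphi_i\Vert+1/i$. Setting $g:=(\psi_i)$, one has $\Vert g\Vert=\lim_{\mathcal U}\Vert\psi_i\Vert=\Vert f\Vert$; and since $\mathcal U$ is free, $\lim_{\mathcal U}(1/i)=0$, so the two-sided estimate forces $\Vert f\pm g\Vert=\lim_{\mathcal U}\Vert\varphi_i\pm\psi_i\Vert=\Vert f\Vert$. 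This proves the first assertion.

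\emph{Step 2: consequences for $\mathcal M$.} By \cite{Groh:JOT84,Raynaud:JOT02} the dual $\mathcal M:=\big(([M_i]_\star)/\mathcal U\big)^*$ is a von Neumann algebra, hence by uniqueness of preduals $\mathcal M_\star=([M_i]_\star)/\mathcal U$, and the nonzero normal positive linear functionals on $\mathcal M$ are precisely the nonzero positive elements of $\mathcal M_\star$. Step~1 shows that $\mathcal M$ satisfies condition (i) of Proposition~\ref{P3}.(1); consequently condition (ii) holds, that is, $\mathcal M_\chi$ is diffuse for every nonzero positive $\chi\in\mathcal M_\star$ --- this is the first half of the ``in particular''. (Concretely: for positive $\chi$ one has $|\chi|=\chi$ and $\mathcal M_{|\chi|}=\mathcal M_\chi$, so Theorem~\ref{T2}, implication (ii)$\Rightarrow$(iii), yields orthogonal projections $e_\pm\in\mathcal M_\chi$ with $e_++e_-=s(\chi)$ and $\chi(e_\pm)=\Vert\chi\Vert/2>0$; thus $\mathcal M_\chi\neq\mathbb C s(\chi)$, and Remarks~\ref{R2}.(1) upgrades this, for all such $\chi$ simultaneously, to diffuseness of every $\mathcal M_\chi$.) Finally Remarks~\ref{R2}.(2) gives that $\mathcal M$ itself is diffuse.

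I do not anticipate a genuine obstacle: the substantive work has already been absorbed into Proposition~\ref{P2} (and hence into the Haagerup--St\o rmer theorem), and what remains is bookkeeping with ultraproduct representatives. The only mild care needed is to arrange a bounded representing family of constant norm $\Vert f\Vert$ and to use that a free ultrafilter annihilates the null sequence $(1/i)$; both are standard facts about Banach-space ultraproducts.
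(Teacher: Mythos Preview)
Your proof is correct and follows essentially the same approach as the paper's: apply Proposition~\ref{P2} with $\varepsilon=1/i$ to a representative, pass to the ultralimit, and then deduce the structural consequences via Theorem~\ref{T2} and Remarks~\ref{R2} (equivalently, Proposition~\ref{P3}.(1)). The only difference is cosmetic: your preliminary normalization to $\Vert\varphi_i\Vert=\Vert f\Vert$ is not needed, since Proposition~\ref{P2} already gives $\Vert\psi_i\Vert=\Vert\varphi_i\Vert$ and hence $\lim_{\mathcal U}\Vert\psi_i\Vert=\lim_{\mathcal U}\Vert\varphi_i\Vert=\Vert f\Vert$ directly.
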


\begin{proof} We may and do assume that $f$ is nonzero.
Let $\{\varphi_i\}_{i \in \mathbb{N}}$ be a representative of
the $f$. By Proposition~\ref{P2}, for each $i \in \mathbb{N}$
one can find $\psi_i \in (M_i)_\star$ in such a way that $\Vert\psi_i\Vert =
\Vert\varphi_i\Vert$ and
$\Vert\varphi_i\Vert - 1/i < \Vert\varphi_i\pm\psi_i\Vert <
\Vert\varphi_i\Vert + 1/i$. Set $g = (\psi_i) \in
([M_i]_\star)/\mathcal{U}$, and then one has
\begin{equation*}
\Vert g\Vert = \lim_{\mathcal{U}} \Vert \psi_i\Vert =
\lim_{\mathcal{U}} \Vert \varphi_i\Vert = \Vert f\Vert \qquad \text{and} \qquad
\Vert f\pm g\Vert =
\lim_{\mathcal{U}}\Vert\varphi_i\pm\psi_i\Vert =
\lim_{\mathcal{U}}\Vert\varphi_i\Vert = \Vert f\Vert.
\end{equation*}
Now, let us first recall that $\mathcal{M}$ is a von Neumann algebra \cite{Groh:JOT84,Raynaud:JOT02}. If $\mathcal{M}$ had a nonzero normal positive linear functional whose centralizer is not diffuse, then Theorem~\ref{T2} together with Remarks~\ref{R2}.(1) would show that for some $f \in ([M_i]_\star)/\mathcal{U}$ there is no $g \in ([M_i]_\star)/\mathcal{U}$ that satisfies the property established in the first part, a contradiction. Thus we have obtained the second part thanks to Remarks~\ref{R2}.(2).
\end{proof}

Remark that the above corollary shows that diffuseness contrasts with semifiniteness under the ultrapower and ultraproduct procedures, see \cite[\S1]{Raynaud:JOT02}.

By using \cite[Theorem 2.1]{Oik} we may give an interpretation of the result above in terms of the Daugavet property which, together with \cite[Corollary 6.5]{BKSW}, shows that the equivalence between the Daugavet and the uniform Daugavet properties for preduals of von Neumann algebras. This says that we have provided a new and correct proof of  \cite[Theorem~5.6]{GuerreroMartin:JFA05}. In fact, we have the following corollary: 

\begin{corollary}\label{C2} Let $M$ be a von Neumann algebra. The following conditions are equivalent{\rm:}
\begin{itemize} 
\item[(1)] $M$ is diffuse. 
\item[(2)] The predual $M_\star$ satisfies the consequence of Proposition \ref{P2}, that is, for every $\varphi\in M_\star$ and every $\varepsilon>0$, there is $\psi_\varepsilon \in M_\star$ so that $\Vert\varphi\Vert - \varepsilon < \Vert \varphi\pm\psi_\varepsilon\Vert < \Vert\varphi\Vert + \varepsilon$ and $\Vert\psi_\varepsilon\Vert = \Vert\varphi\Vert$. 
\item[(3)] The ultraproduct $M_\star/\mathcal{U}$ has the Daugavet property. 
\item[(4)] $M_\star$ has the uniform Daugavet property. 
\item[(5)] $M_\star$ has the Daugavet propety. 
\end{itemize}
\end{corollary}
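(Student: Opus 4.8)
The plan is to establish the cycle of implications $(1)\Rightarrow(2)\Rightarrow(3)\Rightarrow(4)\Rightarrow(5)\Rightarrow(1)$, so that the five statements are all equivalent. Three links are essentially free: $(1)\Rightarrow(2)$ is exactly Proposition~\ref{P2}; $(4)\Rightarrow(5)$ is immediate because the uniform Daugavet property is by definition a strengthening of the Daugavet property (see \cite{BKSW}); and $(5)\Rightarrow(1)$ is Oikhberg's theorem \cite[Theorem 2.1]{Oik}, which asserts that the predual of a von Neumann algebra has the Daugavet property if and only if the algebra is diffuse. Thus the real content lies in $(2)\Rightarrow(3)$ and $(3)\Rightarrow(4)$.

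For $(2)\Rightarrow(3)$ I would rerun the argument in the proof of Corollary~\ref{C1}, noting that it used Proposition~\ref{P2} only through its conclusion, which is precisely hypothesis~(2). Concretely: given a nonzero $f\in M_\star/\mathcal{U}$ with representative $\{\varphi_i\}_{i\in\mathbb{N}}$, apply (2) to each $\varphi_i$ with $\varepsilon=1/i$ to obtain $\psi_i\in M_\star$ with $\|\psi_i\|=\|\varphi_i\|$ and $\|\varphi_i\|-1/i<\|\varphi_i\pm\psi_i\|<\|\varphi_i\|+1/i$, and put $g=(\psi_i)$; since $\|\varphi_i\|\to\|f\|$ along $\mathcal{U}$ and $1/i\to 0$, one gets $\|g\|=\lim_{\mathcal U}\|\psi_i\|=\|f\|$ and $\|f\pm g\|=\lim_{\mathcal U}\|\varphi_i\pm\psi_i\|=\|f\|$. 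Hence the exact norm equation is solvable for every element of $M_\star/\mathcal{U}$. As $M_\star/\mathcal{U}$ is the predual of a von Neumann algebra $\mathcal{M}$ by \cite{Groh:JOT84,Raynaud:JOT02}, Theorem~\ref{T2} together with Remarks~\ref{R2}.(1) shows that every nonzero normal positive functional on $\mathcal{M}$ has diffuse centralizer, so $\mathcal{M}$ is diffuse by Remarks~\ref{R2}.(2); Oikhberg's theorem \cite[Theorem 2.1]{Oik} then gives that $M_\star/\mathcal{U}=\mathcal{M}_\star$ has the Daugavet property, i.e.\ (3). (Alternatively one could cite Corollary~\ref{C1} with $M_i=M$ for all $i$ to get $(1)\Rightarrow(3)$ directly, but then $(2)\Rightarrow(1)$ would need a separate argument, so routing through $(2)\Rightarrow(3)$ is more economical.)

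The implication $(3)\Rightarrow(4)$ is the only step where a genuinely Banach-space-theoretic fact enters, and I expect it to be the main point requiring care: having the Daugavet property for a single ultrapower over a free ultrafilter on $\mathbb{N}$ already forces a uniform modulus in the defining slice condition, hence the uniform Daugavet property. This is the content of \cite[Corollary 6.5]{BKSW}, which I would simply invoke; the only thing to double-check is that the cited form of that corollary indeed passes from one ultrapower over $\mathbb{N}$ (rather than from all ultrapowers at once) to the uniform property. Once $(3)\Rightarrow(4)$ is in place, assembling $(1)\Rightarrow(2)\Rightarrow(3)\Rightarrow(4)\Rightarrow(5)\Rightarrow(1)$ finishes the proof, and it records in particular that \cite[Theorem~5.6]{GuerreroMartin:JFA05} is valid for preduals of von Neumann algebras.
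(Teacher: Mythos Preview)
Your proposal is correct and follows essentially the same route as the paper's proof: the same cycle $(1)\Rightarrow(2)\Rightarrow(3)\Leftrightarrow(4)\Rightarrow(5)\Rightarrow(1)$, with the same external inputs (Proposition~\ref{P2}, Corollary~\ref{C1}, \cite[Theorem~2.1]{Oik}, and \cite[Corollary~6.5]{BKSW}). Your treatment of $(2)\Rightarrow(3)$ is in fact slightly more careful than the paper's bare citation of Corollary~\ref{C1}, since that corollary is stated under hypothesis~(1) rather than~(2); you correctly observe that its proof only uses the conclusion of Proposition~\ref{P2}, which is precisely~(2).
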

\begin{proof} 
(1) $\Rightarrow$ (2) and (2) $\Rightarrow$ (3) are Proposition \ref{P2} and Corollary \ref{C1} with \cite[Theorem 2.1 (b)]{Oik}, respectively. (3) $\Leftrightarrow$ (4) is \cite[Corollary 6.5]{BKSW}. (4) $\Rightarrow$ (5) is trivial by definition. (5) $\Leftrightarrow$ (1) is again \cite[Theorem 2.1 (b)]{Oik}. 
\end{proof} 

Remark that the consequence of Proposition \ref{P2} actually charactrizes when a given von Neumann algebra is diffuse. Note that the equivalence (1) $\Leftrightarrow$ (2) can be shown without appealing to the (uniform) Daugavet property. In fact, the referee kindly informed us of a direct proof of (2) $\Rightarrow$ (1). 

\section*{Acknowledgment} We thank Professor Gilles Godefroy for his comments to the first version of this paper, which gave us a motivation to provide Proposition~\ref{P3} and Corollary~\ref{cor:Girth_curves}. We also thank the referee for his or her careful reading of this paper and for suggesting us to emphasize the equivalence (1) $\Leftrightarrow$ (2) in Corollary \ref{C2}.

\end{document}